\documentclass[12pt,reqno]{amsart}
\usepackage[a4paper]{geometry}
\usepackage{amsthm,hyperref,lmodern,mathrsfs,amssymb,amsmath,amsfonts}
\usepackage{enumerate}
\usepackage[latin1]{inputenc}
\usepackage{versions}

\usepackage{color}

\allowdisplaybreaks

\numberwithin{equation}{section}



\newenvironment{Proof}{\removelastskip\par\medskip
\noindent{\em Proof.}
\rm}{\nolinebreak\hfill\rule{2mm}{2mm}\medbreak\par}

\newenvironment{Proof_th}{\removelastskip\par\medskip
\noindent{\em Alternative proof of Theorem \ref{theo:BL}.}
\rm}{\nolinebreak\hfill\rule{2mm}{2mm}\medbreak\par}

\newenvironment{Pf_th}{\removelastskip\par\medskip
\noindent{\em Proof of Theorem \ref{thm:asym-BL}.}
\rm}{\nolinebreak\hfill\rule{2mm}{2mm}\medbreak\par}

\newtheorem{theo}{Theorem}[section]
\newtheorem{lemme}[theo]{Lemma}
\newtheorem{prop}[theo]{Proposition}

\def\cE{{\mathcal{E}\ \!\!}}
\def\E{{\mathbb{E}\ \!\!}}

\def\N{{\mathbb{N}\ \!\!}}
\def\R{{\mathbb{R}}}
\def\L{{\mathcal{L}\ \!\!}} 
\def\LL{{L \ \!\!}} 

\def\D{{\mathcal{D}\ \!\!}}

\def\P{{\mathbb{P}\ \!\!}}
\def\Q{{Q\ \!\!}} 

\def\C{{\mathcal{C}\ \!\!}}
\def\M{{\mathcal{M}\ \!\!}}

\def\FF{{\mathcal{F}\ \!\!}}

\def\Var{{\mathrm{{\rm Var}}}}

\def\Cov{{\mathrm{{\rm Cov}}}}

\def\and{{\mathrm{{\rm and}}}}

\def\ve{{\varepsilon\ \!\!}}
\def\eps{{\varepsilon\ \!\!}}


\begin{document}

\title[Intertwinings and generalized Brascamp-Lieb inequalities]{Intertwinings and generalized Brascamp-Lieb inequalities}

\author{Marc~Arnaudon} %
\address[M.~Arnaudon]{UMR CNRS 5251, Institut de Math\'ematiques de Bordeaux, Universit\'e Bordeaux 1, France} %
\email{\url{mailto:marc.arnaudon(at)math.u-bordeaux1.fr}} %
\urladdr{\url{http://www.math.u-bordeaux1.fr/~marnaudo/}}

\author{Michel~Bonnefont} %
\address[M.~Bonnefont]{UMR CNRS 5251, Institut de Math\'ematiques de Bordeaux, Universit\'e Bordeaux 1, France} %
\thanks{MB is partially supported by the French ANR-12-BS01-0013-02 HAB project}
\email{\url{mailto:michel.bonnefont(at)math.u-bordeaux1.fr}} %
\urladdr{\url{http://www.math.u-bordeaux1.fr/~mibonnef/}}

\author{Ald\'eric~Joulin} %
\address[A.~Joulin]{UMR CNRS 5219, Institut de Math\'ematiques de Toulouse, Universit\'e de Toulouse, France}%
\thanks{AJ is partially supported by the French ANR-12-BS01-0019 STAB project}
\email{\url{mailto:ajoulin(at)insa-toulouse.fr}}%
\urladdr{\url{http://perso.math.univ-toulouse.fr/joulin/}}

\keywords{Intertwining; Diffusion operator on vector fields; Spectral gap; Brascamp-Lieb type inequalities; Log-
concave probability measure}

\subjclass[2010]{60J60, 39B62, 47D07, 37A30, 58J50.}

\maketitle

\begin{abstract}
We continue our investigation of the intertwining relations for Markov semigroups and extend the results of \cite{bj} to multi-dimensional diffusions. In particular these formulae entail new functional inequalities of Brascamp-Lieb type for log-concave distributions and beyond. Our results are illustrated by some classical and less classical examples.
\end{abstract}

\section{Introduction}
\label{sect:intro}
The goal of this paper is to further complement our understanding of the intertwining relations between gradients and Markov semigroups and to explore its consequences in terms of functional inequalities of spectral flavour. These identities might be seen as a generalization/extension of the (sub-)commutation relations emphasized in the early eighties by Bakry and \'Emery in \cite{bakry_emery} via the well-known $\Gamma_2$ theory. After a preliminary study on this topic in the discrete case of integer-valued birth and death processes \cite{cj} and followed by its continuous counterpart through one-dimensional diffusions \cite{bj}, we investigate in the present notes the case of reversible and ergodic multi-dimensional diffusions on $\R^d$, $d\geq 2$, with generator of the type
$$
\LL f := \Delta f - (\nabla V) ^T \, \nabla f ,
$$
where $V$ is a smooth potential on $\R^d$ satisfying some nice conditions. Such processes admit as their unique invariant probability measure $ \mu$ the probability measure with Lebesgue density proportional to $e^{-V}$ on $\R^d$. \vspace{0.1cm}

The principle of the intertwining technique is the following: differentiate a given smooth Markov semigroup and write it, when it is possible, as an alternative semigroup acting on this derivative. In contrast to the one-dimensional case where the derivative of a function is still a function, the (Euclidean) gradient of a real-valued function defined on $\R^d$ is not a function and the resulting semigroup should act on gradients and not on functions. This apparently insignificant remark makes rather delicate the analysis of the desired intertwining identities in the multi-dimensional case since the two semigroups are, in essence, rather different. \vspace{0.1cm}

Dealing with the Bakry-\'Emery theory, the main protagonist appearing in the usual sub-commutation relations is the so-called \textit{carr\'e du champ (de gradient)} operator, thus fixing the underlying gradient of interest. The assumption required to obtain from these sub-commutation relations functional inequalities such as Poincar\'e or log-Sobolev inequalities is the strong convexity of the potential $V$. The interested reader is referred to the recent monograph \cite{BGL} for a nice introduction to this large body of work, with many references and credit. From the semi-classical analysis point of view, the intertwining relation occurs at the level of the generators. The resulting operator corresponds to the so-called Witten Laplacian acting on 1-forms which has been studied extensively by Helffer and Sj\"ostrand to establish for instance the decay of correlation in models arising in statistical mechanics, cf. \cite{helffer_book}. In the present paper, our main idea is to consider weighted, or ``distorted", gradients, i.e. gradients parametrized by invertible square matrices. This large degree of freedom leads us to obtain, as a consequence of these intertwinings, a family of new functional inequalities extending the classical Brascamp-Lieb inequality satisfied under the strict convexity assumption of the potential $V$. Among other things, an important point in our results resides in the fact that the potential $V$ is no longer required to be convex, allowing us to consider in the analysis probability measures $\mu$ which are non-necessarily log-concave. \vspace{0.1cm}

Let us describe the content of the paper. In Section \ref{sect:intert} we recall basic material on diffusion operators and state our main result, Theorem \ref{theo:intert}, in which an intertwining relation is obtained between a weighted gradient and two different semigroups: the first one is the underlying Markov semigroup acting on functions whereas the second one is a semigroup of Feynman-Kac type acting on weighted gradients. Sections \ref{sect:BL} and \ref{sect:asym_BL} are devoted to analyze the consequences of the intertwining approach in terms of functional inequalities. More precisely we obtain Brascamp-Lieb type inequalities involving the variance or the covariance in the left-hand-side. We call the first ones involving the variance generalized Brascamp-Lieb inequalities since the energy term in the right-hand-side is modified, extending \textit{de facto} the classical one usually obtained without introducing a weight in the gradient. The second inequalities of interest are called asymmetric Brascamp-Lieb inequalities in \cite{menz_otto} and involve the covariance of two functions decorrelated in the sense that the right-hand-side of the inequality is the product of two conjugate $L^p$ norms of their weighted gradients. We also derive new lower bounds on the spectral gap of the diffusion operator and among them, one is an extension to the multi-dimensional setting of the famous variational formula of Chen and Wang derived in the one-dimensional case \cite{chen_wang}. Finally, to convince the reader of the relevance of the intertwining approach, we illustrate our results by revisiting some classical and less classical examples.

\section{Intertwinings}\label{sect:intert}

Let $\C ^\infty (\R ^d , \R)$ be the space of infinitely differentiable real-valued functions on the Euclidean space $(\R^d , \vert \cdot \vert )$, $d\geq 2$, and let $\C _0 ^\infty (\R ^d, \R)$ be the subspace of $\C ^\infty (\R ^d , \R)$ of compactly supported functions. Denote $\Vert \cdot \Vert _\infty$ the essential supremum norm with respect to the Lebesgue measure. In this paper we consider the second-order diffusion operator defined on $\C ^\infty (\R ^d , \R)$ by
$$
\LL f := \Delta f - (\nabla V) ^T \, \nabla f ,
$$
where $V$ is a smooth potential on $\R^d$ whose Hessian matrix $\nabla \nabla V$ is, with respect to the space variable, uniformly bounded from below (in the sense of symmetric matrices). Above $\Delta$ and $\nabla$ stand respectively for the Euclidean Laplacian and gradient and the symbol $^T$ means the transpose of a column vector (or a matrix). Let $\Gamma$ be the \textit{carr\'e du champ} operator which is the bilinear symmetric form defined on $\C ^\infty (\R ^d , \R) \times \C ^\infty (\R ^d , \R)$ by
$$
\Gamma (f,g) := \frac{1}{2} \, \left( \LL (fg) - f \, \LL g - g \, \LL f \right) = (\nabla f) ^T \, \nabla g .
$$
If $e^{-V}$ is Lebesgue-integrable on $\R^d$, a condition which will be assumed throughout the whole paper, then we denote $ \mu$ the probability measure with Lebesgue density proportional to $e^{-V}$ on $\R^d$. The operator $\LL$, which satisfies on $\C _0 ^\infty (\R ^d , \R)$ the invariance property
$$
\int_{\R^d} \LL f \, d\mu = 0,
$$
and also the symmetry with respect to $\mu$, that is, for every $f,g \in \C _0 ^\infty (\R ^d , \R)$,
$$
\cE _\mu (f,g)  := \int_{\R ^d} \Gamma (f,g) \, d\mu = - \int_{\R ^d } f\, \LL g \, d\mu = - \int_{\R ^d } \LL f \, g \, d\mu = \int_{\R ^d } (\nabla f) ^T \, \nabla g \, d\mu ,
$$
is non-positive on $\C _0 ^\infty (\R ^d , \R)$. Since the Euclidean state space is complete, the operator is essentially self-adjoint, i.e. it admits a unique self-adjoint extension (still denoted $\LL$) with domain $\D (\LL) \subset L^2 (\mu)$ in which the space $\C _0 ^\infty (\R ^d , \R)$ is dense for the norm induced by $\LL$, that is,
$$
\Vert f\Vert _{\D (L)} := \sqrt{\Vert f \Vert ^2 _{L^2 (\mu)} + \Vert L f \Vert ^2 _{L^2 (\mu)}}.
$$
By the spectral theorem it generates a unique strongly continuous symmetric semigroup $(P_t)_{t\geq 0}$ on $L^2 (\mu)$ such that for every function $f\in L^2 (\mu)$ and every $t>0$ we have $P_t f \in \D (L)$ and
$$
\partial _t P_t f = P_t Lf = L P_t f .
$$
Here $\partial_t$ denotes the partial derivative with respect to the time parameter $t$. Moreover the ergodicity property holds: for every $f\in L^2 (\mu)$, we have the following convergence in $L^2 (\mu)$:
$$
\lim_{t \to \infty} \, \left \Vert P_t f - \mu (f) \right \Vert _{L^2 (\mu)} = 0,
$$
where $\mu (f)$ stands for the integral of $f$ with respect to $\mu$. Finally, the closure $(\cE _\mu , \D (\cE _\mu))$ of the bilinear form $(\cE _\mu,\C _0 ^\infty (\R ^d , \R))$ is a Dirichlet form on $L^2(\mu)$ and by the spectral theorem we have the dense inclusion $\D (\LL) \subset \D (\cE _\mu)$ for the norm induced by $\cE_\mu$. \vspace{0.1cm}

Dealing with stochastic processes, the operator $\LL$ is the generator of the process corresponding to the unique strong solution $(X_t ^x)_{t\geq 0}$ of the following Stochastic Differential Equation (in short SDE),
$$
dX_t ^x = \sqrt{2} \, dB_t - \nabla V (X_t ^x) \, dt, \quad X_0 ^x = x \in \R ^d,
$$
where $(B_t)_{t\geq 0}$ is a standard $\R^d$-valued Brownian motion on a given filtered probability space $(\Omega , \FF , (\FF_t)_{t\geq 0}, \P)$. By \cite{bakry:non-exp} the lower bound assumption on $\nabla\nabla V$ entails that the process is non-explosive or, in other words, it has an infinite lifetime, and the finiteness of the invariant probability measure $\mu$ reflects its positive recurrence. In terms of semigroups we have $P_t f(x) = \E [f(X_t ^x)]$ for every $f\in \C _0 ^\infty (\R^d, \R)$. When the potential $V(x) = \vert x\vert ^2 /2$ then the process $(X_t ^x)_{t\geq 0}$ is nothing but the Ornstein-Uhlenbeck process starting from $x$ and whose invariant measure is the standard Gaussian distribution, say $\gamma$. In particular we have the well-known intertwining between gradient and semigroup
$$
\nabla P_t f = e^{-t} \, \Q_t (\nabla f),
$$
where in the right-hand-side the semigroup $(Q_t)_{t\geq 0}$ acts on smooth vector fields coordinate by coordinate. The exponential term can be interpreted as a curvature term and is associated to the ergodicity of the process. Actually such a commutation relation is reminiscent of the intertwining at the level of the generator, that is, if $I$ stands for the identity operator, then for every $\C ^\infty (\R^d , \R)$,
$$
\nabla \LL f = (\L - I) \, (\nabla f) .
$$
Once again the expression $\L (\nabla f)$ has to be understood coordinate-wise or, in other words, $\L$ is considered (and will be considered along the paper) as a diagonal matrix operator,
\[
\L (\nabla f) = \left(
\begin{array}{ccc}
\LL & & \\
& \ddots & \\
& & \LL
\end{array}
\right) \, (\nabla f) .
\]
Coming back to the case of a general smooth potential $V$, we wonder if we can obtain a somewhat similar intertwining relation. Doing the computations leads us to the following identity:
$$
\nabla \LL f = (\L - \nabla \nabla V) \, (\nabla f) .
$$
Now our idea is to bring a distortion of the gradient by introducing a weight given by a smooth invertible matrix $A : \R^d \to \M _{d\times d}(\R)$, that is
\begin{equation}
\label{eq:intert_gen}
A \, \nabla \LL f = \L _A ^{M_A} (A\nabla f) ,
\end{equation}
where $\L_A ^{M_A}$ is the matrix Schr\"odinger operator acting on $\C ^\infty (\R^d , \R^d)$, the space of smooth vector fields $F: \R^d \to \R^d$, as
\begin{eqnarray*}
\L _A ^{M_A} F & = & A\, (\L - \nabla\nabla V ) \, (A^{-1} \, F) \\
& = & \L F + 2 \, A \, \nabla A^{-1} \, \nabla F - ( A \, \nabla \nabla V \, A^{-1} - A \, \L \, A^{-1} ) \, F \\
& = & (\L_A -M_A ) \, F ,
\end{eqnarray*}
where $\L_A$ denotes the (possible) non-diagonal matrix operator acting on $\C ^\infty (\R^d , \R^d)$ as
$$
\L _A F :=  \L F + 2 \, A \, \nabla A^{-1} \, \nabla F ,
$$
and $M_A$ is the matrix corresponding to the multiplicative (or zero-order) operator
$$
M_A := A \, \nabla \nabla V \, A^{-1} - A \, \L A^{-1} .
$$
Above the gradient $\nabla F$ of the vector field $F$ is the column vector whose coordinates are the $\nabla F_i$, $i=1, \ldots, d$ and if $A^{-1} = (a^{i,j})_{i,j = 1, \ldots, d}$ then $\nabla A^{-1}$ is the matrix of gradients $(\nabla a^{i,j})_{i,j = 1,\ldots, d}$. \vspace{0.1cm}

Let us turn our attention to the question of symmetry. Denote the positive-definite matrix $S := (A \, A^T)^{-1}$ and let $L^2(S,\mu)$ be the weighted $L^2$ space consisting of vectors fields $F: \R^d \to \R^d$ such that
$$
\int_{\R^d} F^T \, S \, F \, d\mu  < \infty .
$$
Given $f,g \in \C_0 ^\infty( \R^d , \R)$, we have
\begin{eqnarray*}
\int_{\R^d} (\L _A ^{M_A} (A \, \nabla f )) ^T \, S \, A \, \nabla g \, d\mu & = & \int_{\R^d} (\nabla L f) ^T \, \nabla g \, d\mu \\
& = & - \int_{\R^d} \LL f \, \LL g \, d\mu ,
\end{eqnarray*}
so that $- \L _A ^{M_A}$ is a symmetric and non-negative operator on the subspace of weighted gradients $\nabla _A := \{ A \, \nabla f : f\in \C_0 ^\infty( \R^d , \R) \} \subset L^2 (S, \mu)$. However to ensure these two properties on the bigger space $\C _0 ^\infty (\R ^d , \R ^d)$ of compactly supported smooth vectors fields, one needs an additional assumption on the matrix $A$. On the one hand we have for every $F,G \in \C _0 ^\infty (\R ^d , \R ^d)$,
\begin{eqnarray*}
- \int_{\R^d} (\L _A F )^T \, S \, G \, d\mu & = & \int_{\R^d} (\nabla F) ^T \, S \, \nabla G \, d\mu \\
& & + \int_{\R^d} (\nabla F) ^T \, \left( \nabla S - 2 \, (\nabla A^{-1})^{T} \, A^T \, S \right) \, G \, d\mu ,
\end{eqnarray*}
so that the operator $- \L _A$ satisfies the desired properties of symmetry and non-negativity as soon as the following matrix equation holds:
\begin{equation}
\label{eq:S}
\nabla S = 2 \, (\nabla A^{-1})^{T} \, A^T \, S ,
\end{equation}
thus requiring the symmetry of the matrix $(A^{-1})^T \, \nabla A^{-1}$. On the other hand the multiplicative operator $M_A$, seen as an operator on $L^2 (S,\mu)$, is symmetric on $\C _0 ^\infty (\R ^d , \R ^d)$ as soon as the matrix $S \, M_A$ is symmetric, which is equivalent to the symmetry of the matrix $(A^{-1})^T \, \L A^{-1}$. Note that its is also equivalent to the symmetry of the matrix $A^{-1} \, M_A \, A $ which rewrites as
$$
A^{-1} \, M_A \, A = \nabla \nabla V - \L A^{-1} \, A.
$$
Actually we can show that the equation \eqref{eq:S} is equivalent to the three conditions above since by integration by parts we have the identities
\begin{eqnarray*}
\lefteqn{\int_{\R^d} (\nabla F) ^T \, \left( \nabla S - 2 \, (\nabla A^{-1})^{T} \, A^T \, S \right) \, G \, d\mu } \\
& = & \int_{\R^d} (\nabla F) ^T \, \left(  ( A^{-1})^{T} \nabla  A^{-1}  -  \, (\nabla A^{-1})^{T} \, A^{-1}\right) \, G \, d\mu  \\
& = & \int_{\R^d} F ^T \, \left( (\L A^{-1}) ^T \, A^{-1} - (A^{-1})^T \, \L A^{-1} \right) \, G \, d\mu \\
& & - \int_{\R^d} F ^T \, \left( (A^{-1})^T \, \nabla A^{-1} - (\nabla A^{-1})^T \, A^{-1} \right) \, \nabla G \, d\mu .
\end{eqnarray*}
Certainly, even under the symmetry assumption on $A^{-1} \, M_A \, A$, the matrix $M_A$ itself has no reason to be symmetric, unless $S$ is a multiple of the identity, a particular case which will be exploited in Sections \ref{sect:asym_BL} and \ref{sect:ex}. Nevertheless since the matrices $A^{-1} \, M_A \, A$ and $M_A$ are similar they have the same eigenvalues and therefore the matrix $M_A$ is diagonalizable. \vspace{0.1cm}

Let us summarize our situation. We consider two operators acting on $\C _0 ^\infty (\R ^d , \R ^d) \subset L^2 (S,\mu)$: the Schr\"odinger operator $\L _A ^{M_A}$ and the operator $\L _A$. Both are symmetric on $\C _0 ^\infty (\R ^d , \R ^d)$ if and only if one of the following equivalent assertions is satisfied:
\begin{itemize}
\item[$(i)$] the matrix $(A^{-1})^T \, \nabla A^{-1}$ is symmetric.
\item[$(ii)$] the matrix $(A^{-1})^T \, \L \, A^{-1}$ is symmetric.
\item[$(iii)$] the matrix $S \, M_A$ is symmetric.
\item[$(iv)$] the matrix $A^{-1} \, M_A \, A $ is symmetric.
\end{itemize}
Moreover under these assumptions $-\L _A$ is always non-negative on $\C _0 ^\infty (\R ^d , \R ^d)$ whereas $-\L _A ^{M_A}$ is non-negative as soon as the matrix $S \, M_A$ is positive semi-definite. Finally, when restricted to the space of weighted gradients $\nabla_A$, then $-\L _A ^{M_A}$ is symmetric and non-negative without any additional assumption on the invertible matrix $A$. \vspace{0.1cm}

Assume now that the matrix $S \, M_A$ is symmetric and bounded from below, in the sense of symmetric matrices, by $\lambda \, S$, where $\lambda $ is some real parameter independent from the space variable. In other words the symmetric matrix $A^{-1} \, M_A \, A $ is uniformly bounded from below by $\lambda \, I$ (this observation will be used many times all along the paper). Following \cite{Strichartz}, one can show that the Schr\"odinger operator $\L _A ^{M_A}$ is essentially self-adjoint on $\C^\infty _0 (\R ^d, \R^d)$ and thus admits a unique self-adjoint extension (still denoted $\L_A ^{M_A}$) with domain $\D (\L_A ^{M_A}) \subset L^2 (S,\mu)$. The associated semigroup is denoted $(\Q_{t,A} ^{M_A})_{t\geq 0}$ and is thus the unique semigroup solution to the $L^2$ Cauchy problem, that is,
\[
\left \{
\begin{array}{cccc}
\partial _t F & = & \L _A ^{M_A} F & \\
F(\cdot , 0) & = & G, & G\in L^2 (S,\mu),
\end{array}
\right.
\]
where $F(\cdot, t)$ is required to be in $L^2(S,\mu)$ for every $t>0$. Actually, it can be shown as follows that this uniqueness result in $L^2( S,\mu)$ holds in full generality. In the sequel we say that a smooth vector field $F: \R^d \times [0,\infty) \to \R^d$ is locally bounded in $L^2 (S,\mu)$ if
$\sup_{t\in [0,T]} \Vert F(\cdot , t) \Vert _{L^2 (S,\mu)} < \infty$ for every finite time horizon $T>0$.
\begin{prop}
\label{prop:cauchy_pb}
Assume that the matrix $A^{-1} \, M_A \, A $ is symmetric and uniformly bounded from below. Let $F$ be a smooth vector field which is locally bounded in $L^2(S,\mu)$. If such a $F$ is solution to the $L^2$ Cauchy problem
\[
\left \{
\begin{array}{cccc}
\partial _t F & = & \L _A ^{M_A} F \\
F(\cdot , 0) & = & G, &  G\in L^2 (S,\mu),
\end{array}
\right.
\]
then we have $F(\cdot , t ) = \Q_{t,A} ^{M_A} G$ for every $t \geq 0$.
\end{prop}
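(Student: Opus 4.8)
The plan is to conjugate away the weight $A$ and reduce to the classical uniqueness, via an energy estimate, for the heat flow of a Schr\"odinger operator on vector fields whose potential is bounded from below. Recall that by definition $\L_A^{M_A}F=A\,(\L-\nabla\nabla V)\,(A^{-1}F)$. Since $A$ is smooth and invertible, the map $UG:=AG$ is a bijection of $\C_0^\infty(\R^d,\R^d)$ onto itself and a unitary isomorphism from $L^2(\mu)^d$, the space of $\R^d$-valued fields with $L^2(\mu)$ components, onto $L^2(S,\mu)$, because $\int_{\R^d}(AG)^T S(AG)\,d\mu=\int_{\R^d}G^T A^T(AA^T)^{-1}AG\,d\mu=\int_{\R^d}|G|^2\,d\mu$. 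Under this conjugation $\L_A^{M_A}=U(\L-\nabla\nabla V)U^{-1}$; the operator $\L-\nabla\nabla V$, being a Schr\"odinger operator on vector fields with potential $\nabla\nabla V$ bounded from below, is essentially self-adjoint on $\C_0^\infty(\R^d,\R^d)$ in $L^2(\mu)^d$ by \cite{Strichartz}, and the semigroup $(\widetilde Q_t)_{t\ge0}$ it generates satisfies $\Q_{t,A}^{M_A}=U\widetilde Q_tU^{-1}$. Writing $\Phi:=A^{-1}F$, the hypotheses on $F$ translate into: $\Phi$ is a smooth vector field, locally bounded in $L^2(\mu)^d$, solving $\partial_t\Phi=(\L-\nabla\nabla V)\Phi$ with $\Phi(\cdot,0)=A^{-1}G$, and the conclusion $F(\cdot,t)=\Q_{t,A}^{M_A}G$ becomes $\Phi(\cdot,t)=\widetilde Q_t(A^{-1}G)$. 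Since $\widetilde Q_{\cdot}(A^{-1}G)$ is one such solution, by linearity it suffices to show that any smooth vector field $\Phi$, locally bounded in $L^2(\mu)^d$, solving $\partial_t\Phi=(\L-\nabla\nabla V)\Phi$ with zero initial datum, vanishes identically.

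For this I would carry out an energy estimate in the spirit of \cite{Strichartz}. Fix cut-off functions $\chi_n\in\C_0^\infty(\R^d,\R)$ with $0\le\chi_n\le1$, $\chi_n\equiv1$ on the ball $B(0,n)$, $\supp\chi_n\subset B(0,2n)$ and $\Vert\nabla\chi_n\Vert_\infty\le C/n$, and set $\phi_n(t):=\int_{\R^d}\chi_n^2\,|\Phi(\cdot,t)|^2\,d\mu$, which is finite and of class $C^1$ because the integrand is smooth with compact support in $x$. Differentiating in $t$, inserting the equation, integrating by parts in $x$ coordinate by coordinate and completing the square, one obtains, with $\nabla\nabla V\ge c\,I$ the standing lower bound,
\[
\phi_n'(t)=-2\sum_{i=1}^d\int_{\R^d}\big|\nabla(\chi_n\Phi_i)\big|^2\,d\mu+2\int_{\R^d}|\nabla\chi_n|^2\,|\Phi|^2\,d\mu-2\int_{\R^d}\chi_n^2\,\Phi^T\,\nabla\nabla V\,\Phi\,d\mu .
\]
Discarding the first, non-positive, term, using $\int_{\R^d}\chi_n^2\,\Phi^T\nabla\nabla V\,\Phi\,d\mu\ge c\,\phi_n(t)$ and $|\nabla\chi_n|^2\le C/n^2$, we get on any finite interval $[0,T]$
\[
\phi_n'(t)\le-2c\,\phi_n(t)+\frac{C'}{n^2}\,\sup_{s\in[0,T]}\Vert\Phi(\cdot,s)\Vert_{L^2(\mu)^d}^2 .
\]

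Gr\"onwall's lemma and $\phi_n(0)=0$ then yield $\phi_n(t)\le C''(T)\,n^{-2}\sup_{s\in[0,T]}\Vert\Phi(\cdot,s)\Vert_{L^2(\mu)^d}^2$ uniformly on $[0,T]$, the right-hand side being finite by local boundedness; letting $n\to\infty$ and using monotone convergence gives $\Vert\Phi(\cdot,t)\Vert_{L^2(\mu)^d}=0$ for all $t\in[0,T]$, and since $T$ is arbitrary, $\Phi\equiv0$, hence $F\equiv0$ and $F(\cdot,t)=\Q_{t,A}^{M_A}G$. I expect the main obstacle to be the reduction of the first paragraph rather than the estimate itself: one has to verify that conjugation by the smooth invertible $A$ is a genuine unitary equivalence of the self-adjoint realizations of $\L_A^{M_A}$ and $\L-\nabla\nabla V$ together with their semigroups, that it maps the class ``smooth and locally bounded in $L^2(S,\mu)$'' onto the corresponding class for $L^2(\mu)^d$, and --- since the difference of two candidate solutions is a priori only known to be smooth for $t>0$ --- that one may run the estimate on $[\varepsilon,T]$ and let $\varepsilon\to0$, which is legitimate because the initial datum is attained in $L^2$. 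The energy estimate itself is the standard Strichartz argument, whose only delicate point, the control of the cut-off remainder, is handled by the completing-the-square identity: it keeps the terms $\int|\nabla(\chi_n\Phi_i)|^2\,d\mu$ on the favourable side, leaving only the error $2\int|\nabla\chi_n|^2|\Phi|^2\,d\mu=O(1/n^2)$, which the assumed local bound absorbs. One could also dispense with the conjugation and perform the estimate directly on $\phi_n(t)=\int_{\R^d}\chi_n^2\,F^T S F\,d\mu$, at the cost of a weighted Cauchy--Schwarz inequality to absorb the cross term into $\int_{\R^d}\chi_n^2\,(\nabla F)^T S\,\nabla F\,d\mu$.
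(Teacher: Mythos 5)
Your proof is correct, but it takes a genuinely different route from the paper's. The paper (following Li's argument for the heat equation on Riemannian manifolds) works directly with $F$ and the weighted inner product $S$: it first rescales $F$ by $e^{-\inf\rho_A\,t}$ to normalize $A^{-1}M_A A$ to be positive semi-definite, then integrates $\phi^2\,F^T S\,\L_A F$ over $[0,\tau]\times\R^d$, integrates by parts, and controls the cross term by a weighted Cauchy--Schwarz inequality with the explicit choice $\lambda=2$. You instead conjugate the whole problem by the multiplication map $U:G\mapsto AG$, which you correctly identify as a unitary isomorphism $L^2(\mu)^d\to L^2(S,\mu)$ (the verification $A^T(AA^T)^{-1}A=I$ is valid), thereby reducing to the Witten Laplacian $\L-\nabla\nabla V$ on the unweighted space, and then run a pointwise-in-time energy/Gr\"onwall estimate where the cut-off cross term is handled by the exact ``complete the square'' identity rather than by Cauchy--Schwarz. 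Your conjugation buys several things at once: it makes the essential self-adjointness cited from Strichartz immediate (the weighted Schr\"odinger operator is unitarily equivalent to a standard one with potential $\nabla\nabla V$ bounded below), it shows that the uniqueness needs only the standing lower bound on $\nabla\nabla V$ and not the extra symmetry/lower-bound hypothesis on $A^{-1}M_A A$ (the paper uses that hypothesis precisely in the rescaling step you have replaced), and the completing-the-square identity is an equality, so no constant needs to be tuned. What the paper's direct route buys is that it never has to articulate, as you rightly flag as a delicate point, that conjugation by a smooth invertible $A$ really does intertwine the self-adjoint realizations and their semigroups, and that it maps the admissible solution class onto the corresponding class in $L^2(\mu)^d$ --- these are true and routine, but the paper sidesteps them by staying in $L^2(S,\mu)$. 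Your $[\varepsilon,T]\to[0,T]$ remark about the difference of two candidate solutions being a priori smooth only for $t>0$ is a genuine subtlety that the paper's own reduction ``to zero initial data'' glosses over; your treatment of it is, if anything, more careful than the paper's. In short: correct, a bit more conceptual, and essentially an equivalent amount of work.
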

\begin{proof}
We adapt the argument emphasized by Li in \cite{Li} in the context of the heat equation on complete Riemannian manifolds.
By linearity it is sufficient to show that 0 is the unique $F$, smooth and locally bounded in $L^2 (S,\mu)$, solution to the $L^2$ Cauchy problem
\[
\left \{
\begin{array}{ccc}
\partial _t F & = & \L _A ^{M_A} F \\
F(\cdot , 0) & = & 0 .
\end{array}
\right.
\]
Denoting $F$ such a solution and replacing $F(\cdot, t)$ by $e^{- \inf \rho_A \, t} \, F(\cdot, t)$, where $\rho_A : \R ^d \to \R$ stands for the smallest eigenvalue of the matrix $A^{-1} \, M_A \, A $ (recall that $\inf \rho_A > -\infty$ according to our assumption), let us assume without loss of generality that $A^{-1} \, M_A \, A $ is positive semi-definite, i.e. $S\, M_A$ is. Letting $\phi \in \C _0 ^\infty (\R^d , \R)$ we have for every $\tau >0$,
\begin{eqnarray*}
\int_0^\tau \int _{\R^d} \phi^2 \, F^T \, S \, \L _A F \, d\mu \, dt & = & \int_0^\tau \int _{\R^d} \phi^2 \, \left( F^T \, S \, \L _A ^{M_A} F + F^T \, S \, M_A \, F \right) \, d\mu \, dt \\
& = & \int_0^\tau  \int _{\R^d} \phi^2 \, \left(\frac{1}{2} \, \partial_t ( F^T \, S \, F ) + F^T \, S \, M_A \, F \right) \, d\mu \, dt \\
&\geq & \frac{1}{2} \, \int_{\R^d} \phi^2 \, F(\cdot , \tau ) ^T \, S \, F(\cdot , \tau ) \, d\mu.
 \end{eqnarray*}
Now by integration by parts and symmetry of $\L _A$, we have
\begin{eqnarray*}
\int_0^\tau  \int _{\R ^d} \phi^2 \, F ^T \, S \, \L _A F \, d\mu \, dt & = & - \int_0^\tau  \int _{\R^d} (\nabla (\phi^2 \, F)) ^T \, S \, \nabla F \, d\mu \, dt \\
& = & - \int_0^\tau  \int _{\R^d} 2\, \phi \, (\nabla \phi \, F)^T \, S \, \nabla F \, d\mu \, dt \\
& & - \int_0^\tau  \int _{\R^d} \phi^2 \, (\nabla  F) ^T \, S \, \nabla F \, d\mu \, dt .
\end{eqnarray*}
Moreover by Cauchy-Schwarz' inequality, we have for every $\lambda>0$,
\begin{eqnarray*}
2 \, \left \vert \phi \, (\nabla \phi \, F) ^T \, S \, \nabla F \right \vert & = & 2 \, \left \vert (\nabla \phi \, F) ^T \, S \, \phi \, \nabla F \right \vert \\
& \leq & \lambda \, (\nabla \phi \, F) ^T \, S \, \nabla \phi \, F + \frac{1}{\lambda} \, \phi \, (\nabla F) ^T \, S \, \phi \, \nabla F \\
& = & \lambda \, |\nabla \phi | ^2 \, F ^T \, S \, F + \frac{1}{\lambda} \, \phi^2 \, (\nabla F)^T \, S \, \nabla F .
\end{eqnarray*}
Therefore we obtain from the above inequalities,
\begin{eqnarray*}
\frac{1}{2} \, \int_{\R^d} \phi^2 \, F(\cdot , \tau ) ^T \, S \, F(\cdot , \tau) \, d\mu & \leq & \int_0^\tau  \int _{\R ^d} \phi^2 \, F ^T \, S \, \L _A F \, d\mu \, dt \\ & \leq & \left( \frac{1}{\lambda} - 1 \right) \, \int_0^\tau  \int _{\R^d} \phi^2 \, (\nabla F)^T \, S \, \nabla F \, d\mu \, dt \\
& & + \lambda \, \int_0^\tau  \int _{\R^d} |\nabla \phi | ^2 \, F ^T \, S \, F \, d\mu \, dt .
\end{eqnarray*}
In particular for $\lambda=2$ we get
\begin{eqnarray*}
\frac{1}{2} \, \int_{\R^d} \phi^2 \, F(\cdot , \tau ) ^T \, S \, F(\cdot , \tau) \, d\mu & \leq & - \frac{1}{2} \, \int_0^\tau  \int _{\R^d} \phi^2 \, (\nabla F)^T \, S \, \nabla F \, d\mu \, dt \\
& & + 2 \, \int_0^\tau  \int _{\R^d} |\nabla \phi | ^2 \, F ^T \, S \, F \, d\mu \, dt .
\end{eqnarray*}
Finally by completeness there exists a sequence of $[0,1]$-valued functions $(\phi_n)_{n\in \N} \subset \C ^\infty _0 (\R ^d , \R)$ such that $\phi_n \uparrow 1$ pointwise and $\Vert \vert \nabla \phi_n \vert \Vert_\infty \to 0$ as $n\to \infty$. Plugging this sequence of functions in the latter inequality and letting $n\to \infty$ gives both
$$
\int_{\R^d} F(\tau,\cdot ) ^T \, S \, F(\tau,\cdot) \, d\mu = 0 \quad \mbox{ and } \quad \int_0^\tau  \int _{\R^d} (\nabla F)^T \, S \, \nabla F \, d\mu \, dt = 0 ,
$$
hence $F =0$. The proof is achieved.
\end{proof}

Now we are in position to state an intertwining relation between gradient and semigroup.
\begin{theo}
\label{theo:intert}
Assume that the matrix $A^{-1} \, M_A \, A $ is symmetric and uniformly bounded from below. Then the following intertwining relation is satisfied: for every $f\in C_0 ^\infty (\R^d , \R)$,
$$
A \, \nabla P_t f = \Q_{t,A} ^{M_A} (A\, \nabla f ), \quad t \geq 0.
$$
\end{theo}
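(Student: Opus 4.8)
The plan is to identify the vector field $t\mapsto A\,\nabla P_t f$ as a solution of the $L^2(S,\mu)$ Cauchy problem governed by $\L_A^{M_A}$, and then to invoke the uniqueness statement of Proposition \ref{prop:cauchy_pb}. Fix $f\in\C_0^\infty(\R^d,\R)$ and set $F(x,t):=A(x)\,\nabla P_t f(x)$. Since $f$ is smooth and compactly supported and the drift $\nabla V$ is smooth, parabolic regularity (hypoellipticity of $\partial_t-\LL$) ensures that $(x,t)\mapsto P_t f(x)$ is smooth on $\R^d\times[0,\infty)$, with $\partial_t P_t f=\LL P_t f=P_t\LL f$ in the classical sense; as $A$ is smooth, $F$ is then a smooth vector field on $\R^d\times[0,\infty)$ with $F(\cdot,0)=A\,\nabla f\in\C_0^\infty(\R^d,\R^d)\subset L^2(S,\mu)$.

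Next I would verify that $F$ solves $\partial_t F=\L_A^{M_A}F$. Differentiating under the spatial gradient, which is licit by smoothness, gives $\partial_t F=A\,\nabla(\partial_t P_t f)=A\,\nabla\LL(P_t f)$; applying the generator-level intertwining \eqref{eq:intert_gen} to the smooth function $P_t f$ yields $A\,\nabla\LL(P_t f)=\L_A^{M_A}(A\,\nabla P_t f)=\L_A^{M_A}F$, as required.

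It remains to check that $F$ is locally bounded in $L^2(S,\mu)$. Here I would use the algebraic identity $A^T S A=A^T(AA^T)^{-1}A=I$, valid since $A$ is invertible: for any smooth $g$ with $\nabla g\in L^2(\mu)$ one has $\Vert A\,\nabla g\Vert_{L^2(S,\mu)}^2=\int_{\R^d}(\nabla g)^T A^T S A\,\nabla g\,d\mu=\int_{\R^d}|\nabla g|^2\,d\mu=\cE_\mu(g,g)$. Taking $g=P_t f$ and using that $t\mapsto\cE_\mu(P_t f,P_t f)$ is non-increasing (its derivative equals $2\,\cE_\mu(P_t f,\LL P_t f)=-2\Vert\LL P_t f\Vert_{L^2(\mu)}^2\leq 0$), we obtain $\sup_{t\geq 0}\Vert F(\cdot,t)\Vert_{L^2(S,\mu)}^2\leq\cE_\mu(f,f)<\infty$, so $F$ is locally (indeed globally) bounded in $L^2(S,\mu)$. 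Proposition \ref{prop:cauchy_pb} then forces $F(\cdot,t)=\Q_{t,A}^{M_A}(A\,\nabla f)$ for every $t\geq 0$, which is exactly the claimed intertwining.

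The only genuinely delicate points are analytic rather than algebraic: one must ensure that $P_t f$ is smooth up to $t=0$ and differentiable in $t$ in the classical sense (so that the chain of equalities for $\partial_t F$ is legitimate), and that $t\mapsto A\,\nabla P_t f$ is continuous at $t=0$ in $L^2(S,\mu)$ (so that $F$ genuinely qualifies as a solution of the Cauchy problem in the sense of Proposition \ref{prop:cauchy_pb}); both follow from standard parabolic regularity for $\LL$ together with the $L^2(\mu)$-strong continuity of $(P_t)_{t\geq 0}$ and the energy identity above. Everything else reduces to \eqref{eq:intert_gen} and the uniqueness proposition.
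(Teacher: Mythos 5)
Your proposal is correct and follows essentially the same approach as the paper's proof: identify $F(\cdot,t)=A\,\nabla P_t f$ as a smooth, locally $L^2(S,\mu)$-bounded solution of the $\L_A^{M_A}$ Cauchy problem with initial datum $A\,\nabla f$, verify the equation via the generator-level intertwining \eqref{eq:intert_gen}, control the $L^2(S,\mu)$ norm through the identity $A^T S A = I$ and the monotone decay of the Dirichlet energy $t\mapsto\int|\nabla P_t f|^2\,d\mu$, and conclude by the uniqueness statement of Proposition \ref{prop:cauchy_pb}. No gaps.
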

\begin{proof}
Although the proof is somewhat similar to that provided in \cite{bj} in the one-dimensional case, we recall the argument for the sake of completeness. The idea is to show that the vector field defined by $F(\cdot , t) := A \, \nabla P_t f$ is the unique smooth and locally bounded in $L^2(S,\mu)$ solution to the $L^2$ Cauchy problem
\[
\left \{
\begin{array}{cccc}
\partial _t F & = & \L _A ^{M_A} F \\
F(\cdot , 0) & = & G ,
\end{array}
\right.
\]
with $G := A \, \nabla f \in L^2 (S,\mu)$. First since $A$ is assumed to be smooth and by the ellipticity property of the operator $L$, the vector field $F$ is smooth on $\R^d \times (0,\infty)$. Moreover we have
\begin{eqnarray*}
\int_{\R ^d} F(\cdot ,0)^T \, S \, F(\cdot ,0) \, d\mu & = & \int_{\R ^d} (A \, \nabla f )^T \, S \, A \, \nabla f \, d\mu \\
& = & \int_{\R ^d} \vert \nabla f \vert ^2 \, d\mu ,
\end{eqnarray*}
which is finite since $f\in C_0 ^\infty (\R^d , \R)$, hence $F(\cdot ,0) \in L^2 (S,\mu)$. Now for every $t>0$ we have
\begin{eqnarray*}
\int_{\R ^d} F(\cdot ,t)^T \, S \, F(\cdot ,t) \, d\mu & = & \int_{\R ^d} (A \, \nabla P_t f )^T \, S \, A \, \nabla P_t f \, d\mu \\
& = & \int_{\R ^d} \vert \nabla P_t f \vert ^2 \, d\mu \\
& = & - \int_{\R ^d} P_t f \, \LL P_t f \, d\mu .
\end{eqnarray*}
Differentiating with respect to the time parameter and using integration by parts yield
$$
\partial _t \int_{\R ^d} F(\cdot ,t)^T \, S \, F(\cdot ,t) \, d\mu = -2 \, \int_{\R ^d} ( \LL P_t f)^2 \, d\mu \leq 0,
$$
hence the functional above is non-increasing in time and one deduces that
$$
\int_{\R ^d} F(\cdot ,t)^T \, S \, F(\cdot ,t) \, d\mu \leq \int_{\R ^d} F(\cdot ,0)^T \, S \, F(\cdot ,0) \, d\mu < \infty ,
$$
so that $F$ is locally bounded in $L^2 (S, \mu)$. Finally by the intertwining relation \eqref{eq:intert_gen} at the level of the operators, we obtain
$$
\partial _t F = A \, \nabla \LL P_t f = \L _A ^{M_A} (A \, \nabla P_t f ) = \L _A ^{M_A} F ,
$$
and the desired result follows by Proposition \ref{prop:cauchy_pb}.
\end{proof}

\section{Generalized Brascamp-Lieb inequalities and spectral gap}
\label{sect:BL}
Let us turn to the potential consequences of the intertwining relation of Theorem \ref{theo:intert} in terms of functional inequalities. In particular we focus our attention on an inequality due to Brascamp and Lieb \cite{brascamp_lieb}, known nowadays as the Brascamp-Lieb inequality. Under the notation of the preceding part, it is stated as follows: if the symmetric matrix $\nabla \nabla V $ is positive definite then for every sufficiently smooth function $f$ we have
\begin{equation}
\label{eq:BL}
\Var_\mu(f) \leq \int_{\R^d} (\nabla f) \, ^T \, (\nabla \nabla V) ^{-1} \, \nabla f \, d\mu ,
\end{equation}
where $\Var_\mu(f)$ stands for the variance of the function $f$ under $\mu$, that is
$$
\Var_\mu (f) := \mu (f^2) - \mu (f) ^2 .
$$
The extremal functions of the latter inequality are given by $f = \alpha ^T \, \nabla V$ with $\alpha$ some constant vector in $\R^d$. Among the interesting features of such an inequality, one of them is its connection with spectral theory. Indeed under the strong convexity condition $\nabla \nabla V \geq \lambda \, I $ where $\lambda >0$, i.e. the Euclidean version of the so-called Bakry-\'Emery criterion is satisfied (we will turn to this criterion in a moment), the Brascamp-Lieb inequality implies the Poincar\'e inequality with constant $\lambda$, i.e.,
\begin{equation}
\label{eq:Poincare}
\lambda \, \Var_\mu(f) \leq \int_{\R^d} \vert \nabla f \vert ^2 \, d\mu ,
\end{equation}
an inequality giving an exponential rate of convergence to equilibrium in $L^2 (\mu)$ of the underlying semigroup $(P_t)_{t\geq 0}$, that is for every $f\in L^2 (\mu)$,
$$
\left \Vert P_t f - \mu (f) \right \Vert _{L^2 (\mu)} \leq e^{-\lambda t} \, \left \Vert f - \mu (f) \right \Vert _{L^2 (\mu)} , \quad t \geq 0.
$$
The optimal constant $\lambda$ in \eqref{eq:Poincare} is nothing but the spectral gap in $L^2 (\mu)$, say $\lambda_1 (-\LL , \mu)$, of the operator $-\LL$. In practise there exists a spectral gap as soon as the potential $V$ is convex, cf. \cite{kls, bobkov} (the measure $\mu$ is said to be log-concave) or, provided a perturbation argument is used in the Bakry-\'Emery criterion, strictly convex at infinity, both involving rather bad constants with respect to the dimension. \vspace{0.1cm}

Due to the development of modern techniques such as optimal transportation and functional inequalities of geometrical inspiration, Brascamp-Lieb type inequalities have attracted a lot of attention recently. On the one hand, inspired by the known fact that linearizing a transport cost inequality entails an inequality of Poincar\'e-type (the Brascamp-Lieb inequality can be seen as belonging to this class of inequalities), the authors in \cite{bgg,cordero} investigated new forms of transportation inequalities and consequently derived various positive lower bounds on the quantity
$$
\int_{\R^d} (\nabla f) \, ^T \, (\nabla \nabla V) ^{-1} \, \nabla f \, d\mu - \Var_\mu(f) .
$$
In other words they reinforced \eqref{eq:BL} by a remainder term. On the other hand some functional inequalities such as Pr\'ekopa-Leindler and Borrell-Brascamp-Lieb inequalities have been used in \cite{bob_ledoux3,bob_ledoux2} and also in \cite{bgg} to get dimensional refinements of \eqref{eq:BL}. Actually, although both approaches revealed to be convenient, the results emphasized are not really comparable - see the interesting discussion on this fact in \cite{bgg}. Let us also mention the articles \cite{harge, nguyen} who used $L^2$ methods of H\"ormander type \cite{hormander} to obtain these type of results, and also the recent \cite{milman_koles} which focuses on Riemannian manifolds with boundary. \vspace{0.1cm}

In the sequel we obtain a generalization of the Brascamp-Lieb inequality \eqref{eq:BL} by modifying the energy term in the right-hand-side. The starting point of our approach is somewhat similar to the classical method emphasized by Helffer and Sj\"ostrand with the so-called Witten Laplacian (the operator $\L - \nabla \nabla V$ acting on smooth vector fields in our context) and more precisely in \cite{helffer} in which Helffer obtains from convenient covariance identities the decay of correlation in models arising in statistical mechanics. See for instance \cite{helffer_book} for a nice overview of the topic. Below, our main contribution comes from the distortion of such representations by means of the intertwining relation of Theorem \ref{theo:intert}, allowing us a degree of freedom in the choice of the invertible matrix $A$, hence in the right-hand-side of \eqref{eq:BL}. Our result stands as follows.

\begin{theo}
\label{theo:BL}
Assume that the matrix $A ^{-1} \, M_A \, A$ is symmetric and positive definite. Then for every $f\in \C ^\infty _0 (\R^d , \R)$ we have the generalized Brascamp-Lieb inequality
\begin{equation}
\label{eq:BLA}
\Var_\mu(f) \leq \int_{\R^d} (\nabla f) \, ^T \, (A ^{-1} \, M_A \, A) ^{-1} \, \nabla f \, d\mu .
\end{equation}
\end{theo}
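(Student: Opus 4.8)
The plan is to run the Helffer--Sj\"ostrand covariance method in the distorted setting: express $\Var_\mu(f)$ as a time integral involving $\nabla P_tf$, transport it through the intertwining of Theorem~\ref{theo:intert} into the weighted space $L^2(S,\mu)$, and then dominate the Feynman--Kac semigroup by a comparison with the multiplication operator $M_A$.

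First, fix $f\in\C_0^\infty(\R^d,\R)$. Using the symmetry of $\LL$ and the ergodicity $P_tf\to\mu(f)$ in $L^2(\mu)$ one has
\begin{equation*}
\Var_\mu(f)=\int_0^\infty\Big(-\partial_t\!\int_{\R^d}f\,P_tf\,d\mu\Big)\,dt=\int_0^\infty\!\int_{\R^d}(\nabla f)^T\,\nabla P_tf\,d\mu\,dt ,
\end{equation*}
where the inner integrand equals $\int_{\R^d}|\nabla P_{t/2}f|^2\,d\mu\ge 0$, so the right-hand side is a bona fide (finite) integral. Since $A^{-1}M_AA$ is in particular uniformly bounded from below, Theorem~\ref{theo:intert} applies and gives $A\,\nabla P_tf=\Q_{t,A}^{M_A}(A\,\nabla f)$; writing $G:=A\,\nabla f\in L^2(S,\mu)$ and using $(\nabla f)^TA^{-1}=(A\nabla f)^TS$ (which follows from $A^TS=A^{-1}$, $S=(AA^T)^{-1}$), we obtain
\begin{equation*}
\Var_\mu(f)=\int_0^\infty\!\int_{\R^d}(A\nabla f)^T\,S\,\Q_{t,A}^{M_A}(A\nabla f)\,d\mu\,dt=\int_0^\infty\big(G,\Q_{t,A}^{M_A}G\big)_{L^2(S,\mu)}\,dt ,
\end{equation*}
where $(\cdot,\cdot)_{L^2(S,\mu)}$ denotes the scalar product of $L^2(S,\mu)$.

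The second step is to recognise this time integral as $(G,(-\L_A^{M_A})^{-1}G)_{L^2(S,\mu)}$ and to bound $-\L_A^{M_A}$ from below by $M_A$. Set $W:=-\L_A^{M_A}=-\L_A+M_A$, which under our hypotheses is a non-negative self-adjoint operator on $L^2(S,\mu)$ with semigroup $\Q_{t,A}^{M_A}$. The identity $(-\L_AF,F)_{L^2(S,\mu)}=\int_{\R^d}(\nabla F)^TS\,\nabla F\,d\mu\ge 0$ (valid since \eqref{eq:S} holds, i.e. $(A^{-1})^T\nabla A^{-1}$ is symmetric), together with the fact that $M_A$ seen as a multiplication operator is self-adjoint and non-negative for the scalar product of $L^2(S,\mu)$ (equivalent to $A^{-1}M_AA$ being symmetric and positive semidefinite), yields the quadratic-form inequality $W\ge M_A$ on $\C_0^\infty(\R^d,\R^d)$, hence on the common form domain. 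By operator monotonicity of the inverse, for every $\varepsilon>0$ one gets $(W+\varepsilon I)^{-1}\le(M_A+\varepsilon I)^{-1}$, so
\begin{equation*}
\int_0^\infty e^{-\varepsilon t}\big(G,\Q_{t,A}^{M_A}G\big)_{L^2(S,\mu)}\,dt=\big(G,(W+\varepsilon I)^{-1}G\big)_{L^2(S,\mu)}\le\int_{\R^d}(A\nabla f)^T\,S\,(M_A+\varepsilon I)^{-1}(A\nabla f)\,d\mu .
\end{equation*}
Letting $\varepsilon\downarrow 0$ by monotone convergence on both sides gives $\Var_\mu(f)\le\int_{\R^d}(A\nabla f)^TS\,M_A^{-1}(A\nabla f)\,d\mu$, and the matrix identity $(A\nabla f)^TS\,M_A^{-1}(A\nabla f)=(\nabla f)^T A^{-1}M_A^{-1}A\,\nabla f=(\nabla f)^T(A^{-1}M_AA)^{-1}\nabla f$ is exactly \eqref{eq:BLA}.

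The delicate point, and the place genuinely using the hypotheses, is the operator bookkeeping in the second step: one must check that $\Q_{t,A}^{M_A}$ really is the $L^2(S,\mu)$-semigroup of the self-adjoint extension of $-\L_A^{M_A}$ driving $A\nabla P_tf$ (this is Proposition~\ref{prop:cauchy_pb} combined with Theorem~\ref{theo:intert}), that $M_A$ is self-adjoint and non-negative for the $L^2(S,\mu)$ scalar product (one of the equivalent conditions $(i)$--$(iv)$ listed before Proposition~\ref{prop:cauchy_pb}, here in its positive semidefinite form), and that the form inequality $W\ge M_A$ persists on a common form core. The $\varepsilon$-regularisation is what lets the argument run without assuming a uniform positive lower bound on $A^{-1}M_AA$, i.e. without a spectral gap for $-\L_A^{M_A}$; when such a bound is available one may instead use $W^{-1}\le M_A^{-1}$ directly, together with a Cauchy--Schwarz inequality in the $L^2(S,\mu)$ scalar product weighted by $M_A^{-1}$, to reach \eqref{eq:BLA}.
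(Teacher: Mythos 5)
Your proof is correct, and it follows the paper's \emph{first} proof of Theorem~\ref{theo:BL} (the one based on the intertwining relation of Theorem~\ref{theo:intert} and the variance representation via $(-\L_A^{M_A})^{-1}$), not the alternative $\Gamma_2$/H\"ormander-type argument. The main ingredients coincide: the identity $\Var_\mu(f)=\int_0^\infty(A\nabla f,\Q_{t,A}^{M_A}(A\nabla f))_{L^2(S,\mu)}\,dt$ and the form comparison $-\L_A^{M_A}=-\L_A+M_A\ge M_A$ followed by operator monotonicity of the inverse. Where you genuinely differ from the paper is the handling of the merely positive-definite case. The paper splits into two cases: it first treats $\inf\rho_A>0$, then, to drop the uniform lower bound, it regularises the \emph{scalar} Poisson equation $(\varepsilon I-\LL)g_\varepsilon=f$, which produces an additional error term $\varepsilon\int_{\R^d}fg_\varepsilon\,d\mu$ whose vanishing requires the extra estimate $\varepsilon\|g_\varepsilon\|_{L^2(\mu)}\to 0$ by ergodicity and dominated convergence. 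You instead keep the exact variance identity on the left, note that the time-integrand $(\nabla f,\nabla P_tf)_{L^2(\mu)}=\int_{\R^d}|\nabla P_{t/2}f|^2\,d\mu$ is non-negative, regularise only the resolvent on the right-hand side via $e^{-\varepsilon t}$, and then pass to the limit $\varepsilon\downarrow 0$ on both sides by monotone convergence (on the right, after writing $(A\nabla f)^TS(M_A+\varepsilon I)^{-1}A\nabla f=(\nabla f)^T(A^{-1}M_AA+\varepsilon I)^{-1}\nabla f$ and using pointwise monotonicity in $\varepsilon$). This dispenses with the error term and with the two-case structure, giving a slightly cleaner argument that nonetheless rests on exactly the same operator inequality. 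The one caveat worth keeping in mind, which you flag, is that $W\ge M_A$ must hold as a form inequality on a common core in $L^2(S,\mu)$ and that both $W$ and the multiplication by $M_A$ are non-negative self-adjoint there; this is exactly what the hypotheses $(i)$--$(iv)$ plus the positive (semi-)definiteness of $A^{-1}M_AA$ guarantee, and what makes the operator monotonicity of the inverse applicable.
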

\begin{proof}
First let us assume that the smallest eigenvalue $\rho_A$ of the matrix $A ^{-1} \, M_A \, A$ is bounded from below by some positive constant. Since the Schr\"odinger operator $- \L _A ^{M_A}$ is essentially self-adjoint on $\C ^\infty _0 (\R^d, \R ^d)$ and bounded from below by $\inf \rho_A \, I$ in the sense of self-adjoint operators on $L^2(S,\mu)$, that is for every $F\in \C _0 ^\infty (\R^d, \R^d)$,
\begin{eqnarray*}
- \int_{\R ^d} F^T  \, S \, \L _A ^{M_A} F \, d\mu & = & \int_{\R ^d} (\nabla F) \, ^T \, S \, \nabla F \, d\mu + \int_{\R ^d} F^T \, S \, M_A \, F \, d\mu \\
& \geq & \inf \rho_A \, \int_{\R ^d} F^T \, S \, F \, d\mu,
\end{eqnarray*}
then it is invertible in $L^2 (S, \mu)$ and given $F \in \C ^\infty _0 (\R^d , \R ^d)$, the Poisson equation $-\L _A ^{M_A} G = F$ admits a unique solution $G \in L^2 (S,\mu)$ which can be written as
$$
G = \int_0 ^\infty \Q_{t,A} ^{M_A} F \, dt .
$$
Using ergodicity, we have by Theorem \ref{theo:intert} the variance representation
\begin{eqnarray}
\label{eq:var_id}
\nonumber \Var_\mu(f) & = & \int_{\R^d} f \, (f- \mu(f)) \, d\mu \\
\nonumber & = & - \int_{\R^d} \int_0 ^\infty f \, \LL P_t f \, dt \, d\mu \\
\nonumber & = & \int_0 ^\infty \int_{\R^d} (\nabla f) ^T \, \nabla P_t f \, d\mu \, dt \\
\nonumber & = & \int_0 ^\infty \int_{\R^d} (A \, \nabla f) \, ^T \, S \, A \, \nabla P_t f \, d\mu \, dt \\
\nonumber & = & \int_0 ^\infty \int_{\R^d} (A \, \nabla f) \, ^T \, S \, \Q_{t,A} ^{M_A} (A \, \nabla f) \, d\mu \, dt \\
            & = & \int_{\R^d} (A \, \nabla f) \, ^T \, S \, (-\L _A ^{M_A}) ^{-1} (A \, \nabla f) \, d\mu .
\end{eqnarray}
Since we have the reverse inequality
$$
(- \L _A ^{M_A}) ^{-1} = (-\L_A + M_A)^{-1} \leq M_A ^{-1} ,
$$
understood once again in the sense of self-adjoint operators in $L^2 (S, \mu)$, we obtain from \eqref{eq:var_id} the variance inequality
$$
\Var_\mu(f) \leq \int_{\R^d} (A \, \nabla f) \, ^T \, S \, M_A ^{-1} \, (A \, \nabla f) \, d\mu ,
$$
which rewrites as the desired generalized Brascamp-Lieb inequality. \vspace{0.1cm} \\
Now if the symmetric matrix $A ^{-1} \, M_A \, A$ is only positive definite, then an approximation procedure is required. To do so, since the Schr\"odinger operator $- \L_A ^{M_A}$ is only non-negative, we consider for every $\varepsilon >0$ the operator $\varepsilon \, I - \L _A ^{M_A}$ which has the desired property. In particular for every $F \in \C ^\infty _0 (\R^d , \R ^d)$, the Poisson equation $(\varepsilon \, I -\L _A ^{M_A}) \, G = F$ admits a unique solution $G = G_\varepsilon \in L^2 (S,\mu)$ given by
$$
G_\varepsilon = \int_0 ^\infty e^{-\varepsilon \, t} \, \Q_{t,A} ^{M_A} F \, dt .
$$
At the level of the non-negative operator $-\LL$ acting on functions, let $f\in \C ^\infty _0 (\R^d , \R)$ be such that $\mu (f) = 0$ and consider the unique centered solution $g_\varepsilon \in L^2 (\mu)$ to the Poisson equation $(\varepsilon \, I -\LL ) \, g_\varepsilon = f$ which also admits the integral representation
$$
g_\varepsilon = \int_0 ^\infty e^{-\varepsilon \, t} \, P_t f \, dt .
$$
Using the same method as before and with the help of Cauchy-Schwarz' inequality, the variance identity \eqref{eq:var_id} becomes
\begin{eqnarray*}
\Var_\mu(f) & = & \int_{\R^d} f \, (\varepsilon \, I - \LL ) \, g_\varepsilon \, d\mu \\
            & = & \varepsilon \, \int_{\R^d} f \, g_\varepsilon \, d\mu + \int_0 ^\infty e^{-\varepsilon t} \int_{\R^d} (A \, \nabla f) \, ^T \, S \, A \, \nabla P_t f \, d\mu \, dt \\
            & = & \varepsilon \, \int_{\R^d} f \, g_\varepsilon \, d\mu + \int_0 ^\infty e^{-\varepsilon t} \int_{\R^d} (A \, \nabla f) \, ^T \, S \, \Q_{t,A} ^{M_A} (A \, \nabla f) \, d\mu \, dt \\
            & \leq & \varepsilon \, \Vert f\Vert _{L^2 (\mu)} \, \Vert g_\varepsilon \Vert _{L^2 (\mu)} + \int_{\R^d} (A \, \nabla f) \, ^T \, S \, (\varepsilon \, I -\L _A ^{M_A}) ^{-1} (A \, \nabla f) \, d\mu \\
            & \leq & \varepsilon \, \Vert f\Vert _{L^2 (\mu)} \, \Vert g_\varepsilon \Vert _{L^2 (\mu)} + \int_{\R^d} (A \, \nabla f) \, ^T \, S \, M_A  ^{-1} \, (A \, \nabla f) \, d\mu .
\end{eqnarray*}
Finally we have
$$
\varepsilon \, \Vert g_\varepsilon \Vert _{L^2 (\mu)} \leq \varepsilon \, \int_0 ^\infty e^{-\varepsilon t} \, \Vert P_t f \Vert _{L^2 (\mu)} \, dt = \int_0 ^\infty e^{-s} \, \Vert P_{s/\varepsilon} f \Vert _{L^2 (\mu)} \, ds ,
$$
which converges to 0 as $\varepsilon \to 0$ by ergodicity and the dominated convergence theorem. The proof is achieved.
\end{proof}
Let us briefly mention an alternative proof of Theorem \ref{theo:BL} which avoid the intertwining approach emphasized in Theorem \ref{theo:intert}. More precisely, the argument is based on the $L^2$ method of H\"ormander and the technique developed by Bakry and his co-authors \cite{bakry_emery, BGL} to obtain functional inequalities such as Poincar\'e or log-Sobolev inequalities: the so-called $\Gamma_2$-calculus that we recall now the main idea. Since we have already defined the carr\'e du champ operator $\Gamma$, let us define the iterated operator $\Gamma_2$ by the following formula: for every $f,g\in \C ^\infty (\R^d , \R)$,
$$
\Gamma _2 (f,g) := \frac{1}{2} \, \left( \LL \Gamma (f,g) - \Gamma (f, \LL g) - \Gamma (\LL f, g)\right) .
$$
Then the famous result is the following: given a positive constant $\lambda$, the Poincar\'e inequality \eqref{eq:Poincare} is satisfied with constant $\lambda$ if and only if the inequality
$$
\int_{\R ^d} \Gamma _2 (f,f) \, d\mu \geq \lambda \, \int_{\R ^d} \Gamma  (f,f) \, d\mu ,
$$
holds for every $f\in \C ^\infty _0 (\R^d , \R)$, which rewrites by invariance of the measure $\mu$ and integration by parts as the inequality
$$
\int_{\R ^d} (\LL f )^2 \, d\mu \geq \lambda \, \int_{\R ^d} \vert \nabla f \vert ^2 \, d\mu .
$$
In our context the operator $\Gamma_2$ is given by
$$
\Gamma _2 (f,f) = \Vert \nabla \nabla f \Vert _{HS} ^2  + (\nabla f) \,  ^T \, \nabla \nabla V \, \nabla f ,
$$
the norm $\Vert \cdot \Vert _{HS}$ standing for the Hilbert-Schmidt norm of the matrix $\nabla \nabla f$. Therefore a sufficient condition ensuring the Poincar\'e inequality \eqref{eq:Poincare} is to assume that the potential $V$ is strongly convex, leading to the previously mentioned Bakry-\'Emery criterion. \vspace{0.1cm}

As in the $\Gamma_2$-calculus we start now by the quantity $\mu ((\LL f)^2 )$ and want to write it as the sum of two terms involving the distortion matrix $A$: a first term resembling to the operator $\Gamma$ plus a second term we hope to be non-negative. In other words, the game is to extract the minimum positivity of this expected non-negative term in order to get the second term as large as possible. We have for every $f\in \C ^\infty _0 (\R^d , \R)$,
\begin{eqnarray}
\label{eq:gamma_2A}
\nonumber \int_{\R ^d} (\LL f )^2 \, d\mu & = & \int_{\R ^d} \Vert A^{-1} \, \nabla (A \, \nabla f) \Vert _{HS} ^2 \, d\mu \\
\nonumber & & + \int_{\R ^d} (A\, \nabla f )^T \, S \, \left( A \, \nabla \nabla V \, A^{-1} - A \, \L A^{-1} \right) \, A \,  \nabla f \, d\mu \\
\nonumber & = & \int_{\R ^d} \Vert A^{-1} \, \nabla (A \, \nabla f) \Vert _{HS} ^2 \, d\mu + \int_{\R ^d} (\nabla f) \, ^T \, \left( \nabla \nabla V - \L A^{-1} \, A \right) \,  \nabla f \, d\mu \\
\nonumber & = & \int_{\R ^d} \Vert A^{-1} \, \nabla (A \, \nabla f) \Vert _{HS} ^2 \, d\mu + \int_{\R ^d} (\nabla f) \, ^T \, A^{-1} \, M_A \, A \,  \nabla f \, d\mu \\
& \geq & \int_{\R ^d} (\nabla f) \, ^T \, A^{-1} \, M_A \, A \,  \nabla f \, d\mu .
\end{eqnarray}
We are now able to give our second proof of Theorem \ref{theo:BL}.
\begin{Proof_th}
Since the previous approximation procedure can be adapted to the present proof, we assume to simplify the presentation that the operator $-\LL$ is bounded from below by some positive constant in the sense of self-adjoint operators on $L^2(\mu)$. Hence for every $f\in \C ^\infty _0 (\R^d , \R)$ such that $\mu (f) = 0$, the Poisson equation $-\LL g = f$ has a unique centered solution $g \in L^2 (\mu)$. Then by Cauchy-Schwarz' inequality,
\begin{eqnarray*}
\Var_\mu (f) & = & \int_{\R ^d} f^2 \, d\mu \\
& = & - \int_{\R ^d} f\, \LL g \, d\mu \\
& = & \int_{\R ^d} (\nabla f) ^T \, \nabla g \, d\mu \\
& \leq & \sqrt{\int_{\R ^d} (\nabla f) \, ^T \,  ( A^{-1} \, M_A \, A ) ^{-1} \, \nabla f \, d\mu } \, \sqrt{\int_{\R ^d} (\nabla g) \, ^T \,  A^{-1} \, M_A \, A \, \nabla g \, d\mu } \\
& \leq & \sqrt{\int_{\R ^d} (\nabla f) \, ^T \,  ( A^{-1} \, M_A \, A ) ^{-1} \, \nabla f \, d\mu } \, \sqrt{\int_{\R ^d} (\LL g )^2 \, d\mu } \\
& = & \sqrt{\int_{\R ^d} (\nabla f) \, ^T \,  ( A^{-1} \, M_A \, A ) ^{-1} \, \nabla f \, d\mu } \, \sqrt{\Var_\mu (f)} ,
\end{eqnarray*}
where we used \eqref{eq:gamma_2A} to obtain the last inequality. Finally dividing both sides by $\sqrt{\Var_\mu (f)}$ and squaring the inequality leads to the desired result.
\end{Proof_th}

As we have already seen, the matrix appearing in the right-hand-side of \eqref{eq:BLA} is
$$
A ^{-1} \, M_A \, A = \nabla \nabla V - \L A^{-1} \, A ,
$$
and therefore Theorem \ref{theo:BL} can be considered as an extension of the classical Brascamp-Lieb inequality covered by the choice of the distortion matrix $A = I$. Note however that for a given invertible matrix $A$ satisfying the assumptions of Theorem \ref{theo:BL}, the resulting generalized Brascamp-Lieb might not be directly comparable to the classical one. In particular there is no explicit expression of the possible extremal functions. \vspace{0.1cm}

Dealing now with the notion of spectral gap, an immediate application of Theorem \ref{theo:BL} entails the following result, which is an extension to the multi-dimensional setting of the famous variational formula of Chen and Wang established in the one-dimensional case \cite{chen_wang}.
\begin{theo}
\label{theo:spectral}
Assume that the matrix $A^{-1} \, M_A \, A$ is symmetric and its smallest eigenvalue $\rho_A$ is bounded from below by some positive constant. Then the spectral gap $\lambda_1 (-\LL , \mu)$ satisfies
\begin{equation}
\label{eq:rho_A}
\lambda_1 (-\LL , \mu) \geq \inf \, \rho_A .
\end{equation}
\end{theo}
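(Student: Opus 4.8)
The plan is to deduce the bound directly from the generalized Brascamp-Lieb inequality of Theorem \ref{theo:BL}, reading it as a Poincar\'e inequality. Since $\rho_A \geq \inf \rho_A > 0$ by assumption, the symmetric matrix $A^{-1} \, M_A \, A$ is positive definite, so Theorem \ref{theo:BL} applies and gives, for every $f \in \C_0^\infty(\R^d,\R)$,
$$
\Var_\mu(f) \leq \int_{\R^d} (\nabla f)^T \, (A^{-1} \, M_A \, A)^{-1} \, \nabla f \, d\mu .
$$

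The first step is to compare the matrix $(A^{-1} \, M_A \, A)^{-1}$ with a multiple of the identity. Because $A^{-1} \, M_A \, A$ is symmetric with smallest eigenvalue $\rho_A$ which is everywhere $\geq \inf \rho_A$, we have the pointwise matrix inequality $A^{-1} \, M_A \, A \geq (\inf \rho_A) \, I$ in the sense of symmetric matrices, hence, by inverting, $(A^{-1} \, M_A \, A)^{-1} \leq \tfrac{1}{\inf \rho_A} \, I$. Plugging this into the generalized Brascamp-Lieb inequality yields
$$
\Var_\mu(f) \leq \frac{1}{\inf \rho_A} \int_{\R^d} |\nabla f|^2 \, d\mu , \qquad f \in \C_0^\infty(\R^d,\R),
$$
which is exactly the Poincar\'e inequality \eqref{eq:Poincare} with constant $\lambda = \inf \rho_A$.

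The last step is to invoke the variational characterization of the spectral gap: the optimal constant in the Poincar\'e inequality, taken over non-constant functions $f$ with $\int |\nabla f|^2 \, d\mu < \infty$, equals $\lambda_1(-\LL,\mu)$. Since $\C_0^\infty(\R^d,\R)$ is dense in $\D(\cE_\mu)$ for the form norm (as recalled in Section \ref{sect:intert}), the inequality above extends to all such $f$, and therefore $\lambda_1(-\LL,\mu) \geq \inf \rho_A$, as claimed.

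I do not expect any genuine obstacle here: the whole content has already been packaged into Theorem \ref{theo:BL}, and the present statement is essentially the observation that bounding the matrix weight below by a scalar turns the generalized Brascamp-Lieb inequality into a spectral gap estimate. The only minor points to be careful about are that positive definiteness of $A^{-1} \, M_A \, A$ (needed to apply Theorem \ref{theo:BL}) is guaranteed by the hypothesis $\inf \rho_A > 0$, and that passing from the eigenvalue bound on the matrix to the bound on its inverse uses only that the matrix is symmetric; both are routine.
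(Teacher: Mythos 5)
Your proof is correct and is exactly what the paper has in mind: the text before Theorem \ref{theo:spectral} describes it as ``an immediate application of Theorem \ref{theo:BL}'' without spelling out the argument, and your spelling-out — using $A^{-1}M_A A \geq (\inf\rho_A)\,I$ pointwise to bound the inverse by $\tfrac{1}{\inf\rho_A}\,I$, plugging into the generalized Brascamp-Lieb inequality to get the Poincar\'e inequality with constant $\inf\rho_A$, and then invoking density of $\C_0^\infty(\R^d,\R)$ in $\D(\cE_\mu)$ to identify that constant with a lower bound on the spectral gap — is precisely the intended deduction. No gaps or deviations.
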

In the one-dimensional case, the equality holds at least if $\lambda_1 (-\LL , \mu)$ is an eigenvalue of $-L$. However the optimality is not so clear in the multi-dimensional context. Indeed, to obtain the equality in \eqref{eq:rho_A}, one needs to get the equalities in the proof of Theorem \ref{theo:BL}, i.e. if the eigenvector $f_1$ associated to the spectral gap $\lambda_1 (-\LL , \mu)$ exists, the question is to find a good matrix $A$ satisfying the assumptions of Theorem \ref{theo:spectral} and such that
$$
\L_A  (A \, \nabla f_1 ) = 0 \quad \mbox{ and } \quad M_A \, A \, \nabla f_1 = \lambda_1 (-\LL , \mu) \, A \, \nabla f_1 .
$$
In particular one deduces that $\lambda_1 (-\LL , \mu)$ is also an eigenvalue of the matrix $M_A$ with associated eigenvector $A \, \nabla f_1 $. In contrast to the one-dimensional case \cite{chen_wang, bj}, for which we know that $f_1$ is strictly monotone and thus the optimal choice of function $a$ is $a = 1/f_1 '$, the multi-dimensional setting is more delicate since we have no idea of the behaviour of $f_1$ (except for a product measure $\mu$ for which we take for $A$ a diagonal matrix with the $1/(f_1 ^i) '$ on the diagonal). \vspace{0.1cm}

Let us continue to explore the consequences of the intertwining approach emphasized in Theorem \ref{theo:intert} in terms of spectral gap. In contrast to Theorem \ref{theo:spectral} where we exhibit a lower bound on the spectral gap given by the infimum of a certain quantity related to the matrix $A^{-1} \, M_A \, A$, we propose now an alternative lower bound which can be seen as an integrated version of the latter lower bound. These kind of results already appeared in a work of Veysseire \cite{veysseire} in the context of compact Riemannian manifolds and also in the recent article \cite{bj} for one-dimensional diffusions by means of the intertwining approach. On the basis of Theorem \ref{theo:intert}, we have in mind the presence of the weight matrix $A$ in the forthcoming formula. Our result is the following.
\begin{theo}
\label{theo:veysseire}
Assume that the matrix $A^{-1} \, M_A \, A$ is symmetric and its smallest eigenvalue $\rho_A$ is bounded from below by some positive constant. Assume also that the matrix $S$ is uniformly bounded from below by $\alpha$ and from above by $\beta$, where $\alpha, \beta $ are some positive constants. Then we have the lower bound on the spectral gap
$$
 \lambda_1 (-\LL , \mu) \geq \frac{1}{  \left(\int _{\R ^d}\frac{d\mu}{\rho_A} \right) + \frac{\left(1-\frac{\alpha}{\beta} \right)}{\inf \rho_A}}.
$$
\end{theo}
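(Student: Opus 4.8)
The plan is to transplant Veysseire's argument \cite{veysseire} to the weighted-gradient framework, using the intertwining of Theorem \ref{theo:intert} to furnish the required Bochner-type estimate.

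First I would reduce to an eigenfunction. Theorem \ref{theo:spectral} already gives $\lambda_1:=\lambda_1(-\LL,\mu)\geq\inf\rho_A>0$ under the present hypotheses; by a routine approximation (replace the eigenfunction by $P_\varepsilon$ of a compactly supported function and let $\varepsilon\downarrow0$ at the end) it is enough to treat the case where $\lambda_1$ is an eigenvalue with smooth eigenfunction $f$, normalized by $\mu(f)=0$ and $\mu(f^2)=1$, so that $\int_{\R^d}|\nabla f|^2\,d\mu=\lambda_1$. Since $P_tf=e^{-\lambda_1 t}f$, Theorem \ref{theo:intert} forces $A\,\nabla P_tf=e^{-\lambda_1 t}\,A\,\nabla f=\Q^{M_A}_{t,A}(A\,\nabla f)$, so that $A\,\nabla f$ is an eigenvector: $\L_A^{M_A}(A\,\nabla f)=-\lambda_1\,A\,\nabla f$.

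Next I would derive a distorted Bochner inequality for the scalar function $w:=\bigl((A\,\nabla f)^T S\,(A\,\nabla f)\bigr)^{1/2}=|\nabla f|\geq0$. Expanding the pointwise carr\'e-du-champ identity for the diffusion part $\L_A$ in the $L^2(S,\mu)$-geometry — in which, by the structural identity \eqref{eq:S}, the first-order terms generated by $\nabla A^{-1}$ are exactly matched by those coming from $\nabla S$ — and combining it with the eigenrelation above, with the pointwise bound $A^{-1}\,M_A\,A\geq\rho_A\,I$, and with a Kato-type (Cauchy--Schwarz) inequality for vector fields, one obtains that $w$ satisfies, in the distributional sense,
\[
\LL w+\lambda_1\,w\ \geq\ \rho_A\,w-r,
\]
where $r\geq0$ is an error term which vanishes identically when $S$ is a scalar multiple of the identity (then $\L_A$ is $\mu$-symmetric and the Kato inequality holds with no defect). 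Invoking the two-sided bound $\alpha I\leq S\leq\beta I$, one estimates $r$ by a multiple of $\bigl(1-\tfrac{\alpha}{\beta}\bigr)\,w$, the factor $1-\tfrac{\alpha}{\beta}$ measuring how far the eigenvalues of $S$ are from being equal.

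Finally I would extract the harmonic mean. Since $\rho_A\geq\inf\rho_A>0$, the previous inequality forces $\LL w+\lambda_1\,w+r>0$ on $\{w>0\}$, and there $1/\rho_A\geq w/(\LL w+\lambda_1\,w+r)$. Integrating this against $d\mu$, using the invariance $\int_{\R^d}\LL w\,d\mu=0$ and the normalization $\int_{\R^d}w^2\,d\mu=\lambda_1$, and running the convexity argument of \cite{veysseire} while feeding in the estimate on $r$, one is led to
\[
\int_{\R^d}\frac{d\mu}{\rho_A}\ \geq\ \frac{1}{\lambda_1}-\frac{1-\tfrac{\alpha}{\beta}}{\inf\rho_A},
\]
which is the announced bound after rearrangement. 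The behaviour of $w=|\nabla f|$ on the critical set of $f$ is handled by the usual regularization $w\mapsto(w^2+\varepsilon)^{1/2}$ followed by a limiting argument.

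The hard part is twofold. First, one must establish the distorted, $S$-weighted Bochner inequality and, crucially, show that the defect created by $S$ not being scalar assembles into a \emph{nonnegative} error controlled \emph{only} by $1-\alpha/\beta$ — and not by any a priori uncontrolled quantity such as $\|\nabla S\|$ or $\|\nabla\rho_A\|$ — which is precisely where \eqref{eq:S} is used. Second, the passage from the pointwise inequality to the integrated harmonic-mean bound genuinely requires the pointwise information carried by the eigenfunction (together with $\int_{\R^d}\LL w\,d\mu=0$) and does \emph{not} follow from the generalized Brascamp--Lieb inequality of Theorem \ref{theo:BL} alone, the latter yielding only the cruder bound $\lambda_1\geq\inf\rho_A$ of Theorem \ref{theo:spectral}.
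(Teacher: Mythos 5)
Your proposal takes a genuinely different route from the paper, and unfortunately it has a gap at its crucial step: the ``distorted Bochner inequality'' $\LL w+\lambda_1 w\geq\rho_A w-r$ with $r$ controlled pointwise by $(1-\alpha/\beta)\,w$.

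The first problem is structural. For general $A$ the operator $\L_A$ is a non-diagonal matrix operator; it is \emph{not} the vectorization of a scalar diffusion generator (that only happens in the special case $A=a\,I$, cf.\ Section \ref{sect:asym_BL}). Consequently there is no ``pointwise carr\'e-du-champ identity for $\L_A$ in the $L^2(S,\mu)$-geometry'' to expand: the identity $-\int (\L_A F)^T S\,G\,d\mu=\int(\nabla F)^T S\,\nabla G\,d\mu$ that \eqref{eq:S} buys you is an \emph{integrated} identity obtained by integration by parts, not a pointwise one. Relatedly, the quantity $w=\bigl(F^T S F\bigr)^{1/2}$ with $F=A\nabla f$ collapses to $|\nabla f|$ (because $S=(AA^T)^{-1}$), so the only honest pointwise Bochner formula available is the classical one for $\LL$, namely
\[
\LL w+\lambda_1 w\ \geq\ \frac{(\nabla f)^T\,\nabla\nabla V\,\nabla f}{w}\ \geq\ \rho_1\,w \quad\text{on }\{w>0\},
\]
which produces the smallest eigenvalue $\rho_1$ of $\nabla\nabla V$, \emph{not} $\rho_A$. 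The discrepancy $\nabla\nabla V-A^{-1}M_A A=(\L A^{-1})A$ is a genuine pointwise term whose sign and size are not controlled by $1-\alpha/\beta$; it disappears only after integration by parts against the full $\|A^{-1}\nabla(A\nabla f)\|_{HS}^2$ term, as in the identity \eqref{eq:gamma_2A}. So the claimed pointwise inequality with $\rho_A$ is not available, and the assertion that the defect ``assembles into a nonnegative error controlled only by $1-\alpha/\beta$'' is exactly the hard point that the proposal does not, and I believe cannot, justify.

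The paper sidesteps this entirely by staying at the integrated/spectral level and never reducing to an eigenfunction. It first shows $\lambda_1(-\LL,\mu)\geq\Lambda(\L_A^{M_A})$ via a contraction estimate for the semigroup $\Q_{t,A}^{M_A}$ plugged into the variance representation \eqref{eq:var_id}. It then bounds $\Lambda(\L_A^{M_A})$ from below directly from the quadratic form $\int(\nabla F)^T S\,\nabla F\,d\mu+\int F^T S M_A F\,d\mu$ for a \emph{general} normalized $F\in\D(\L_A^{M_A})$: the second term is estimated by $\int\rho_A h^2\,d\mu$ with $h=\sqrt{F^T S F}$, and the first by applying the Poincar\'e inequality \emph{coordinate-wise} to the $F_i$'s together with the two-sided bound $\alpha I\leq S\leq\beta I$ — this is precisely where the factor $1-\alpha/\beta$ arises, as an integrated defect, not a pointwise one. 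A final Cauchy--Schwarz $(\int h\,d\mu)^2\leq\int\rho_A h^2\,d\mu\cdot\int\rho_A^{-1}\,d\mu$ and rearrangement give the theorem. If you want to salvage the Bochner route, you would have to restrict to $A=a\,I$ (where the scalar generator $L_a$ exists and a genuine pointwise $\Gamma_2$-with-distortion is computable, as in Lemma \ref{lem2:non-exp}), but then $S=a^{-2}I$ and the constants $\alpha,\beta$ are simply $\inf a^{-2},\sup a^{-2}$, which is a strictly narrower statement than the theorem as formulated.
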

\begin{Proof}
We start with the variance identity established in the proof of Theorem \ref{theo:BL}: for every $f\in \C_0 ^\infty (\R^d , \R)$,
\begin{eqnarray*}
\Var_\mu(f) & = & \int_0 ^\infty \int_{\R^d} (A \, \nabla f) \, ^T \, S \, \Q_{t,A} ^{M_A} (A \, \nabla f) \, d\mu \, dt ,
\end{eqnarray*}
which leads by Cauchy-Schwarz' inequality to the inequality
\begin{eqnarray}
\label{eq:var_ineq}
\nonumber \Var_\mu(f) & \leq & \int_0 ^\infty \sqrt{\int_{\R^d} (A \, \nabla f) \, ^T \, S \, A \, \nabla f \, d\mu} \, \sqrt{\int_{\R^d} (\Q_{t,A} ^{M_A} (A \, \nabla f) ) ^T \, S \, \Q_{t,A} ^{M_A} (A \, \nabla f) \, d\mu} \, dt \\
& = & \int_0 ^\infty \sqrt{\int_{\R^d} \vert \nabla f \vert ^2 \, d\mu} \, \sqrt{\int_{\R^d} (\Q_{t,A} ^{M_A} (A \, \nabla f)) ^T \, S \, \Q_{t,A} ^{M_A} (A \, \nabla f) \, d\mu} \, dt .
\end{eqnarray}
Denote
$$
\Lambda (\L_A ^{M_A}) := \inf \left \{ - \int_{\R ^d} F^T \, S \, \L _A ^{M_A} F \, d\mu : F \in \D (\L_A ^{M_A}); \, \int_{\R ^d} F^T \, S \, F \, d\mu = 1 \right \} ,
$$
the bottom of the spectrum in $L^2(S,\mu)$ of the Schr\"odinger operator $-\L_A ^{M_A}$. Recall that we have
$$
- \int_{\R ^d} F^T  \, S \, \L _A ^{M_A} F \, d\mu = \int_{\R ^d} (\nabla F) \, ^T \, S \, \nabla F \, d\mu + \int_{\R ^d} F^T \, S \, M_A \, F \, d\mu ,
$$
hence by our first assumption we already know that $\Lambda (\L_A ^{M_A}) \geq \inf\, \rho_A >0$. Since the operator $\L_A ^{M_A} + \Lambda (\L_A ^{M_A}) \, I $ with domain $\D (\L_A ^{M_A})$ is dissipative on $L^2(S,\mu)$, the semigroup $(e^{\Lambda (\L_A ^{M_A}) t } \, \Q_{t,A} ^{M_A})_{t\geq 0} $ is a contraction semigroup on $L^2(S,\mu)$ and thus we have the estimate
\begin{eqnarray*}
\int_{\R^d} (\Q_{t,A} ^{M_A} (A \, \nabla f) )^T \, S \, \Q_{t,A} ^{M_A} (A \, \nabla f) \, d\mu & \leq & e^{-2 \, \Lambda (\L_A ^{M_A}) \, t } \, \int_{\R^d} (A \, \nabla f) \, ^T \, S \, A \, \nabla f \, d\mu \\
& = & e^{-2 \, \Lambda (\L_A ^{M_A}) \, t } \, \int_{\R^d} \vert \nabla f \vert ^2 \, d\mu .
\end{eqnarray*}
Plugging then in the variance inequality \eqref{eq:var_ineq} entails the Poincar\'e inequality
$$
\Var_\mu (f) \leq \frac{1}{\Lambda (\L_A ^{M_A})} \, \int_{\R^d} \vert \nabla f \vert ^2 \, d\mu ,
$$
i.e. we have the comparison
\begin{equation}
\label{eq:lambda1_MA}
\lambda _1 (-\LL ,\mu) \geq \Lambda (\L_A ^{M_A}).
\end{equation}
Now we aim at bounding from below the quantity $\Lambda (\L_A ^{M_A})$ by a constant depending on $\lambda _1 (-\LL ,\mu)$. To simplify the notation in the sequel of the proof, we denote $\lambda_1$ the spectral gap $\lambda _1 (-\LL ,\mu)$. Let $F$ be a smooth vector field and denoting $h = \sqrt{F^T \, S \, F}$ we assume that $\int_{\R ^d} h^2 \, d\mu = 1 $. On the one hand we have
$$
\int_{\R ^d} F^T \, S \, M_A \, F \, d\mu \geq \int_{\R ^d} \rho_A \, h^2 \, d\mu ,
$$
and on the other hand the assumption on the matrix $S$ together with the Poincar\'e inequality entail the following computations:
\begin{eqnarray*}
\int_{\R ^d} (\nabla F) \, ^T \, S \, \nabla F \, d\mu & \geq & \alpha \, \int_{\R ^d} (\nabla F) \, ^T \, \nabla F \, d\mu \\
& = & \alpha \, \sum_{i=1} ^d \int_{\R ^d} \vert \nabla F_i \vert ^2 \, d\mu \\
& \geq & \alpha \, \lambda_1 \, \sum_{i=1} ^d \Var _\mu (F_i) \\
& = & \alpha \, \lambda_1 \, \left( \int_{\R ^d} \vert F\vert ^2 \, d\mu - \left \vert \int _{\R ^d} F \, d\mu \right \vert ^2 \right) \\
& \geq & \frac{\alpha}{\beta} \, \lambda_1 \, \int_{\R ^d} h^2 \, d\mu - \lambda_1 \, \left( \int _{\R ^d} F \, d\mu\right) ^T \, S \, \left( \int _{\R ^d} F \, d\mu\right) \\
& \geq & \frac{\alpha}{\beta} \, \lambda_1 - \lambda_1 \, \left( \int _{\R ^d} h \, d\mu\right) ^2 ,
\end{eqnarray*}
where to obtain the last inequality we used Jensen's inequality. Therefore coming back to the definition of $\Lambda (\L_A ^{M_A})$ and using Cauchy-Schwarz' inequality, we get
\begin{eqnarray*}
\Lambda (\L_A ^{M_A}) & \geq & \frac{\alpha}{\beta} \, \lambda_1 + \inf \left \{ \int_{\R ^d} \rho_A \, h^2 \, d\mu  - \lambda_1 \, \left( \int _{\R ^d} h \, d\mu\right) ^2 : h \in L^2 (\rho_A \, d\mu); \, \int _{\R ^d} h^2  \, d\mu = 1 \right \} \\
& \geq & \frac{\alpha}{\beta} \, \lambda_1 + \inf \left \{ \int_{\R ^d} \rho_A \, h^2 \, d\mu  \, \left( 1 - \lambda_1 \, \int_{\R ^d} \frac{d\mu}{\rho_A} \right) : h \in L^2 (\rho_A \, d\mu); \, \int _{\R ^d} h^2  \, d\mu = 1 \right \},
\end{eqnarray*}
and combining with \eqref{eq:lambda1_MA} yields to
\begin{equation}
\label{eq:lambda_inf}
\left( 1-\frac{\alpha}{\beta} \right) \, \lambda_1 \geq \inf \left \{ \int_{\R ^d} \rho_A \, h^2 \, d\mu  \, \left( 1 - \lambda_1 \, \int_{\R ^d} \frac{d\mu}{\rho_A} \right) : h \in L^2 (\rho_A \, d\mu); \, \int _{\R ^d} h^2  \, d\mu = 1 \right \}.
\end{equation}
Now we observe that although two different cases may occur, both lead to the desired conclusion. Indeed if
$$
1 - \lambda_1 \, \int_{\R ^d} \frac{d\mu}{\rho_A} < 0 ,
$$
then the conclusion trivially holds whereas if
$$
1 - \lambda_1 \, \int_{\R ^d} \frac{d\mu}{\rho_A} \geq 0,
$$
then \eqref{eq:lambda_inf} entails the inequality
$$
\left( 1-\frac{\alpha}{\beta} \right) \, \lambda_1 \geq \inf \, \rho_A  \, \left( 1 - \lambda_1 \, \int_{\R ^d} \frac{d\mu}{\rho_A} \right) ,
$$
and rearranging the terms completes the proof.
\end{Proof}
As announced, our result generalizes that of Veysseire \cite{veysseire} in the sense that the choice of the identity matrix $I$ for $A$ entails the inequality
$$
\lambda_1 (-\LL , \mu) \geq \frac{1}{\int_{\R ^d} \frac{d\mu}{\rho_1}} ,
$$
where the smallest eigenvalue $\rho_1$ of the matrix $\nabla \nabla V$ is assumed to be positive. In particular in this case we might avoid the assumption $\inf \rho_1 >0$ and replace it by $\rho_1 \geq 0$ by using an approximation procedure (certainly in this situation it may happen that the above integral is infinite, hence giving no information on the spectral gap). Let us give the short argument for the sake of completeness. We only assume that $\rho_1 \geq 0$, i.e. $\nabla \nabla V$ is a positive semi-definite matrix and thus the measure $\mu$ is log-concave.
Applying Theorem \ref{theo:veysseire} with $A = I$ to the generator $\LL ^\varepsilon := \Delta - (\nabla V_\varepsilon)^T \, \nabla$ associated to the strongly convex potential $V_\varepsilon := V+ \varepsilon \vert \cdot \vert ^2 /2$ and whose invariant measure $\mu_\varepsilon$ has Lebesgue density proportional to $e^{-V _\varepsilon}$, we have
\begin{eqnarray*}
\lambda_1 (- \LL ^\varepsilon , \mu_\varepsilon) & \geq & \frac{1}{\int_{\R ^d} \frac{d\mu_\varepsilon }{\rho_1 + \varepsilon}} .
\end{eqnarray*}
In other words the generator $L ^\varepsilon$ corresponds to the approximation already used in the proof of Theorem \ref{theo:BL} with $A = I$ since it is straightforward to observe the following intertwining relation:
$$
\nabla L ^\varepsilon f = (\L - \nabla \nabla V_\varepsilon ) \, (\nabla f) = (\L - \nabla \nabla V - \varepsilon \, I ) \, (\nabla f).
$$
By Beppo Levi's theorem, the integral in the right-hand-side of the inequality above tends to $\int_{\R ^d} (1/\rho_1) \, d\mu$ as $\eps \to 0$. Now let $\eta>0$ and $f_\eta \in \D (\cE _\mu)$ be such that
\[
\frac{ \int _{\R ^d} \left \vert \nabla f_\eta \right \vert ^2 \, d\mu }{ \int_{\R^d} f_{\eta} ^2 \, d\mu - \left( \int_{\R^d} f_\eta \, d\mu \right)^2  } \leq \lambda_1 (-\LL , \mu ) + \eta.
\]
Since $\D (\cE _\mu) \subset \D (\cE _{\mu_\eps})$ we have
\[
 \lambda_1 (-\LL ^\eps , \mu_\eps ) \leq \frac{ \int _{\R^d} \left \vert \nabla f_\eta \right \vert ^2 \, d\mu _\eps }{ \int_{\R^d} f_\eta^2 \, d\mu_\eps - \left(   \int_{\R^d} f_\eta \, d\mu_\eps \right)^2  },
\]
and by Beppo Levi's theorem together with the dominated convergence theorem, we get at the limit $\eps \to 0$:
\[
 \limsup_{\eps \to 0} \lambda_1 (-\LL ^\eps , \mu_\eps ) \leq \frac{ \int _{\R ^d} \left \vert \nabla f_\eta \right \vert ^2 \, d\mu }{ \int_{\R^d} f_{\eta} ^2 \, d\mu - \left( \int_{\R^d} f_\eta \, d\mu \right)^2  } \leq \lambda_1 + \eta.
\]
Finally letting $\eta \to 0$ gives the desired conclusion.

\section{Asymmetric Brascamp-Lieb inequalities}
\label{sect:asym_BL}
Our main result Theorem \ref{theo:intert} also allows us to obtain Brascamp-Lieb type inequalities with the covariance instead of the variance, in the spirit of the works of Helffer \cite{helffer} and Ledoux \cite{ledoux} about decay of correlations for spin systems, see also the recent articles \cite{menz_otto, carlen_cordero, menz}. Such inequalities are called asymmetric Brascamp-Lieb inequalities since the two functions are decorrelated in the sense that the right-hand-side of the inequality is the product of two conjugate $L^p$ norms of their weighted gradients. Note that such covariance estimates are also useful to derive concentration results, as regards the papers of Houdr\'e and his co-authors \cite{bht, houdre}. \vspace{0.1cm}

In order to establish these asymmetric Brascamp-Lieb inequalities, we need some material and in particular an additional ingredient which is the stochastic representation of our Feynman-Kac type semigroups of interest. Actually one can show that such a representation holds when the invertible matrix $A$ is a multiple of the identity (we ignore if this is true in the general case). For the choice of a distortion matrix $A = a \, I$, where $a$ is a smooth positive function on $\R^d$, we denote in the sequel $M_a$, $\L _a ^{M_a}$, $\L _a$ and $(\Q_{t,a} ^{M_a})_{t\geq 0}$ the corresponding operators and semigroup acting on vector fields. The key point in the forthcoming analysis resides in the fact that since $\L_a$ is a diagonal operator, it can also be interpreted as an operator acting on functions. In this case we denote $\LL_a$ the corresponding diffusion operator: for every $f\in \C ^\infty (\R^d , \R)$,
\begin{eqnarray*}
L_a f & = & \LL f + 2 \, a \, (\nabla a^{-1}) ^T \, \nabla f \\
& = & \Delta f - (\nabla V_a) ^T \, \nabla f,
\end{eqnarray*}
where $V_a$ is the smooth potential $V_a = V + \log (a^2)$. This operator is symmetric and non-positive on $\C _0 ^\infty (\R^d , \R)$ with respect to the measure $\mu_a = a^{-2} \cdot \mu$, hence it is essentially self-adjoint on $\C _0 ^\infty (\R^d , \R)$ and admits a unique self-adjoint extension (still denoted $L_a $) with domain $\D (L_a) \subset L^2 (\mu_a)$. Notice that the measure $\mu_a$ has no reason to be finite \textit{a priori} - such an assumption is not required in the sequel - and moreover the corresponding process might explode, i.e. it goes to infinity in finite time or, equivalently, the associated semigroup $(P_{t,a})_{t\geq 0}$ is not stochastically complete: $P_{t,a} 1 \leq 1$ for some (hence for all) $t >0$. In particular, an analytical sufficient and ``easy-to-check" condition ensuring the non-explosion is to assume that the Hessian matrix of the potential $V_a$ is uniformly bounded from below, cf. \cite{bakry:non-exp}. An alternative usual criterion is the so-called Lyapunov (or drift) condition \textit{\`a la Meyn-Tweedie}, cf. \cite{meyn_tweedie}, stating the existence of a smooth positive function $f$ going to infinity at infinity and two constants $\alpha , \beta \geq 0$ such that
$$
L_a f \leq \alpha \, f +\beta .
$$
In the sequel we denote $(X_{t,a} ^x)_{t\geq 0}$ the (potentially minimal) diffusion process starting from $x\in \R ^d$ and whose generator is $L_a$. \vspace{0.1cm}

Now we are ready to state the main result of the present section. Note that since we choose $A = a\, I$, the matrix $A^{-1} \, M_A \, A$ is automatically symmetric and we have
$$
A^{-1} \, M_A \, A = M_a = \nabla \nabla V - (a \, \LL a^{-1}) \, I .
$$
Therefore to invoke Theorem \ref{theo:intert} in the proof below one needs only to assume that the smallest eigenvalue of $M_a$ is bounded from below. Denote the covariance under $\mu$ of two given functions $f,g \in L^2 (\mu)$ as
$$
\Cov _\mu (f,g) := \mu (f\, g) - \mu (f) \, \mu (g) .
$$
Our result stands as follows.
\begin{theo}
\label{thm:asym-BL}
Assume that the smallest eigenvalue $\rho_a$ of the matrix $M_a$ is positive. Moreover assume either the Hessian matrix of the potential $V_a$ is uniformly bounded from below or the Lyapunov condition written above. Then for every functions $f,g \in \C _0 ^\infty (\R ^d , \R)$, we have
\begin{equation}
\label{eq:asym_BL}
\left \vert \Cov _\mu (f,g) \right \vert \leq \left \Vert \frac{ | a \, \nabla g|}{\rho_a } \right\Vert _\infty \, \int_{\R ^d} \frac{\vert \nabla f \vert}{a} \, d\mu .
\end{equation}
\end{theo}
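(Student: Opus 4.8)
The plan is to re-run the variance argument behind Theorem~\ref{theo:BL}, but in ``asymmetric'' form: instead of bounding a quadratic form in $L^2(S,\mu)$ one controls a weighted gradient \emph{pointwise} through a stochastic representation of the Feynman--Kac semigroup. Note first that Theorem~\ref{theo:intert} applies here: since $A=a\,I$ the matrix $A^{-1}M_AA=M_a$ is symmetric, and $M_a\geq\rho_a\,I$ with $\rho_a>0$ forces $M_a$ to be uniformly bounded from below. Assume $\mu(g)=0$ without loss of generality and regularize as in the proof of Theorem~\ref{theo:BL}: let $\psi_\varepsilon:=\int_0^\infty e^{-\varepsilon t}\,P_tg\,dt$, so that $(\varepsilon I-\LL)\psi_\varepsilon=g$ and, by the intertwining, $a\,\nabla\psi_\varepsilon=\int_0^\infty e^{-\varepsilon t}\,\Q_{t,a}^{M_a}(a\,\nabla g)\,dt$. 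Then
\[
\Cov_\mu(f,g)=\int_{\R^d}f\,(\varepsilon I-\LL)\psi_\varepsilon\,d\mu=\varepsilon\int_{\R^d}f\,\psi_\varepsilon\,d\mu+\int_{\R^d}(\nabla f)^T\,\nabla\psi_\varepsilon\,d\mu ,
\]
where the first term tends to $0$ as $\varepsilon\to0$ by ergodicity, exactly as in Theorem~\ref{theo:BL}, and $\bigl|\int_{\R^d}(\nabla f)^T\nabla\psi_\varepsilon\,d\mu\bigr|\leq\int_{\R^d}|\nabla f|\,|\nabla\psi_\varepsilon|\,d\mu$. Thus everything reduces to the pointwise bound $|a(x)\,\nabla\psi_\varepsilon(x)|\leq\bigl\||a\nabla g|/\rho_a\bigr\|_\infty$, uniform in $\varepsilon$; once it is in hand, letting $\varepsilon\to0$ and using that $\nabla f$ has compact support (so $\int|\nabla f|/a\,d\mu<\infty$ since $a$ is continuous and positive) yields \eqref{eq:asym_BL}.

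Next I would invoke the stochastic representation of $\Q_{t,a}^{M_a}$, available precisely because $A=a\,I$: the operator $\L_a$ is diagonal and acts componentwise as the diffusion generator $L_a=\Delta-(\nabla V_a)^T\nabla$ of the process $(X_{t,a}^x)_{t\geq0}$, which is non-explosive under either of the stated hypotheses, and an It\^o/Feynman--Kac computation gives $\Q_{t,a}^{M_a}F(x)=\E\bigl[\mathcal Q_t^x\,F(X_{t,a}^x)\bigr]$, where $(\mathcal Q_t^x)_{t\geq0}$ is the $d\times d$ matrix-valued multiplicative functional solving $\tfrac{d}{dt}\mathcal Q_t^x=-\mathcal Q_t^x\,M_a(X_{t,a}^x)$, $\mathcal Q_0^x=I$. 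That this representation coincides with the analytically defined semigroup follows from the uniqueness statement of Proposition~\ref{prop:cauchy_pb}, once one checks that $(x,t)\mapsto\E[\mathcal Q_t^x F(X_{t,a}^x)]$ is smooth (parabolic regularity of $L_a$) and locally bounded in $L^2(S,\mu)$ --- the latter because $\|\mathcal Q_t^x\|_{\mathrm{op}}\leq1$ (as $M_a\geq\rho_a\,I\geq0$) reduces it to the $L^2(\mu_a)$-contractivity of $(P_{t,a})_{t\geq0}$. Since $M_a$ is symmetric with $M_a\geq\rho_a\,I$, differentiating $|(\mathcal Q_t^x)^Tv|^2$ along the ODE gives the sharper estimate $\|\mathcal Q_t^x\|_{\mathrm{op}}\leq\exp\!\bigl(-\int_0^t\rho_a(X_{s,a}^x)\,ds\bigr)$.

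The key point is then the normalization trick: writing $(a\nabla g)(y)=\rho_a(y)\cdot(a\nabla g)(y)/\rho_a(y)$ and bounding the second factor in norm by $C:=\bigl\||a\nabla g|/\rho_a\bigr\|_\infty$, and setting $\Phi_x(t):=\int_0^t\rho_a(X_{s,a}^x)\,ds$, one obtains
\[
\Bigl|\int_0^\infty e^{-\varepsilon t}\,\mathcal Q_t^x\,(a\nabla g)(X_{t,a}^x)\,dt\Bigr|\leq C\int_0^\infty\|\mathcal Q_t^x\|_{\mathrm{op}}\,\rho_a(X_{t,a}^x)\,dt\leq C\int_0^\infty e^{-\Phi_x(t)}\,\Phi_x'(t)\,dt\leq C .
\]
Taking expectations (Fubini is legitimate, the integrand being nonnegative after taking absolute values) gives $|a(x)\,\nabla\psi_\varepsilon(x)|=\bigl|\E[\int_0^\infty e^{-\varepsilon t}\mathcal Q_t^x(a\nabla g)(X_{t,a}^x)\,dt]\bigr|\leq C$, which is exactly the pointwise bound required in the first step, and the proof closes.

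The step I expect to be the main obstacle is the middle one: rigorously establishing the Feynman--Kac representation of $\Q_{t,a}^{M_a}$ and identifying it with the semigroup of Section~\ref{sect:asym_BL}. This is where the extra hypotheses on $V_a$ (Hessian bounded from below, or a Lyapunov drift condition) are genuinely used --- they guarantee the non-explosion of $(X_{t,a}^x)$, without which the probabilistic formula only describes a minimal/absorbed dynamics and the identification through Proposition~\ref{prop:cauchy_pb} becomes delicate. Everything else --- smoothness of $\psi_\varepsilon$, interchange of $\E$ and $\int_0^\infty dt$, the vanishing of the $\varepsilon$-term, and the finiteness of $\int|\nabla f|/a\,d\mu$ --- is routine.
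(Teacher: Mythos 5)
Your global strategy matches the paper's: write the covariance in resolvent form, pass to the stochastic Feynman--Kac representation of $\Q_{t,a}^{M_a}$ with the matrix-valued multiplicative functional $\mathcal Q_t^x$ (the paper's $Y_{t,a,x}$), use $\|\mathcal Q_t^x\|_{\mathrm{op}}\le e^{-\int_0^t\rho_a(X_{s,a}^x)\,ds}$ (Lemma~\ref{lemme:Y}) and close with the ``normalization'' $(-L_a^{\rho_a})^{-1}\rho_a\le 1$ --- which you do pathwise via $\int_0^\infty e^{-\Phi_x}\Phi_x'\,dt\le1$, an observation that is equivalent to the paper's use of $L_a 1=0$. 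The surrounding machinery (the $\varepsilon$-regularization, vanishing of the $\varepsilon$-term, Tonelli, finiteness of $\int|\nabla f|/a\,d\mu$) is fine.

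The genuine gap is the step you yourself flag as ``the main obstacle,'' namely the identification $\Q_{t,a}^{M_a}(a\nabla g)(x)=\E[\mathcal Q_t^x(a\nabla g)(X_{t,a}^x)]$, and the route you sketch does not close it. You propose to deduce it from Proposition~\ref{prop:cauchy_pb} after checking that $H(x,t):=\E[\mathcal Q_t^x(a\nabla g)(X_{t,a}^x)]$ is smooth and locally bounded in $L^2(S,\mu)$. But Proposition~\ref{prop:cauchy_pb} is a uniqueness statement among \emph{solutions} of $\partial_tF=\L_a^{M_a}F$; to invoke it you must also prove that $H$ actually solves this PDE, and that is precisely the content of a Feynman--Kac theorem for the matrix potential $M_a$ --- not a byproduct of ``parabolic regularity of $L_a$.'' The paper sidesteps this by running the argument in the opposite direction: starting from the known smooth solution $a\nabla P_t g$ supplied by Theorem~\ref{theo:intert}, it first establishes the a priori $L^\infty$ bound $a|\nabla P_t g|\le e^{-k_a t}P_{t,a}(|a\nabla g|)$ (Theorem~\ref{thm:compar_feynman}, which requires the two preparatory Lemmas~\ref{lem1:non-exp} and~\ref{lem2:non-exp} and is the technical heart of the section), and only then applies the $L^\infty$ martingale-uniqueness result (Proposition~\ref{prop:feynman}). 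That boundedness is not cosmetic: without it, the local martingale $Z_{s,a,x}=Y_{s,a,x}F(X_{s,a}^x,t-s)$ is not known to be a true martingale, so the identity $Z_{0,a,x}=\E[Z_{t,a,x}]$ cannot be read off, and local $L^2(S,\mu)$-boundedness is too weak to substitute for it. In short, you correctly located the difficulty and correctly identified where non-explosion enters, but your proposed workaround via Proposition~\ref{prop:cauchy_pb} silently assumes the very Feynman--Kac fact that must be proved, while the paper's route through Theorem~\ref{thm:compar_feynman} is precisely the missing ingredient.
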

If we choose $a \equiv 1$ then we have $M_a = \nabla \nabla V $ and all the assumptions reduce to the positivity of the smallest eigenvalue $\rho_1$ of the matrix $\nabla \nabla V$, so that the inequality \eqref{eq:asym_BL} becomes
$$
\left \vert \Cov _\mu (f,g) \right \vert \leq \left \Vert \frac{ |\nabla g|}{\rho_1} \right\Vert _\infty \, \int_{\R ^d} \vert \nabla f \vert \, d\mu .
$$
In other words such an inequality is nothing but the case $(p,q) = (\infty, 1)$ of the multi-dimensional version appearing in \cite{carlen_cordero} of the original one-dimensional result established by Menz and Otto in \cite{menz_otto}. Once again our main contribution comes from the freedom in the choice of the distortion function $a$. \vspace{0.1cm}

Let us provide a guideline strategy of the proof of Theorem \ref{thm:asym-BL}. Starting by the covariance version of the variance identity \eqref{eq:var_id}, i.e for every $f,g \in \C _0 ^\infty (\R^d , \R)$,
\begin{eqnarray*}
\Cov _\mu (f,g) & = & \int_{\R^d} (a \, \nabla f) ^T \, (- \L _a ^{M_a})^{-1} (a \, \nabla g) \, d\mu _a ,
\end{eqnarray*}
we are done once we have proven the following regularization result: for every $g\in \C _0 ^\infty (\R ^d , \R)$,
$$
\left \Vert \vert (-\L _a ^{M_a})^{-1} (a\nabla g)\vert \right \Vert _\infty \leq \left \Vert \frac{\vert a\nabla g \vert }{\rho_a} \right \Vert _\infty ,
$$
and through the representation
\[
(-\L _a ^{M_a})^{-1} (a\nabla g) =\int_0^{\infty} \Q_{t,a}^{M_a} (a\nabla g) \, dt,
\]
it means that a $L^\infty$ bound on the underlying semigroup $(Q_{t,a} ^{M_a})_{t\geq 0}$ acting on weighted gradients is required. However, in contrast to the classical Feynman-Kac situation where we focus our attention on a Feynman-Kac semigroup acting on functions, the semigroup $(Q_{t,a} ^{M_a})_{t\geq 0}$ has no reason to be bounded \textit{a priori} and that is why a careful attention has to be brought to this problem. Actually the intertwining relation of Theorem \ref{theo:intert} entails many interesting consequences and among them the potential boundedness of $Q_{t,a} ^{M_a} (a\nabla f)$ reduces to that of $a \nabla P_t f$. Hence the first part of this section is devoted to show this boundedness property, leading to the result below. We admit that the proof is somewhat technical but the result is interesting in its own right.

\begin{theo}
\label{thm:compar_feynman}
Assume that the smallest eigenvalue $\rho_a$ of the matrix $M_a$ is bounded from below by some real constant $k_a \in \R$. Letting $f \in  \C ^\infty _0 (\R^d, \R)$, we have the inequality
\begin{equation}
\label{eq:sub}
a \, \vert \nabla P_t f \vert \leq e^{- k_a t } \, P_{t,a} (\vert a\nabla f \vert ), \quad t \geq 0.
\end{equation}
\end{theo}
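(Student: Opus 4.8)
The strategy is the classical Bochner–Feynman–Kac route: write down a parabolic evolution equation for a suitable scalar function built out of $a\,\nabla P_t f$, show it satisfies a differential inequality of the form $(\partial_t - L_a)\,(\text{something}) \le k_a\,(\text{something})$ in the viscosity / distributional sense, and then integrate along the diffusion $(X^x_{t,a})_{t\ge0}$ to obtain the stated pointwise bound. Concretely, fix $f\in\C_0^\infty(\R^d,\R)$ and set $u(x,t):=a(x)\,\nabla P_tf(x)\in\R^d$. By the operator-level intertwining \eqref{eq:intert_gen} we have $\partial_t u=\L_a^{M_a}u=\L_a u-M_au$, i.e.\ $\partial_t u=\L_a u-M_au$ where $\L_a$ acts diagonally as the scalar operator $L_a$ on each component. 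The quantity to control is $h(x,t):=|u(x,t)|=\sqrt{u^Tu}$. Away from the zero set of $u$ one computes, using $\partial_t u=L_a u-M_au$ componentwise,
\[
\partial_t h = \frac{u^T(L_au-M_au)}{|u|},
\qquad
L_a h = \frac{u^T L_a u}{|u|} + \frac{|u|^2\,\|\nabla u\|_{HS}^2 - |(\nabla u)^T u|^2}{|u|^3},
\]
and the second (Kato-type) term is nonnegative by Cauchy–Schwarz, so that
\[
\partial_t h \le L_a h - \frac{u^T M_a u}{|u|} \le L_a h - \rho_a\,h \le L_a h - k_a\,h ,
\]
where the middle inequality uses that $M_a=A^{-1}M_AA$ is symmetric with smallest eigenvalue $\rho_a\ge k_a$. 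Thus $v(x,t):=e^{k_at}h(x,t)$ is a (viscosity) subsolution of the parabolic equation $\partial_t v=L_av$ with initial datum $v(\cdot,0)=|a\nabla f|$.

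Next I would run the Feynman–Kac / maximum-principle comparison: since the process $(X^x_{t,a})$ associated with $L_a$ is non-explosive under either of the two stated hypotheses (Hessian of $V_a$ bounded below, by \cite{bakry:non-exp}, or the Lyapunov condition \`a la Meyn–Tweedie, cf.\ \cite{meyn_tweedie}), the minimal semigroup $(P_{t,a})_{t\ge0}$ is in fact stochastically complete, $P_{t,a}1=1$, and the comparison principle for bounded-below subsolutions gives $v(x,t)\le P_{t,a}(v(\cdot,0))(x)$, that is
\[
e^{k_at}\,a(x)\,|\nabla P_tf(x)| \le P_{t,a}\!\left(|a\nabla f|\right)(x),\qquad t\ge0,
\]
which is exactly \eqref{eq:sub}. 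To make the comparison rigorous I would apply It\^o's formula to $s\mapsto v(X^x_{s,a},t-s)$ on $[0,t]$, localize with a sequence of stopping times $\tau_n$ exiting balls of radius $n$, use the subsolution inequality to see that the process plus the drift correction is a local submartingale, take expectations, and then let $n\to\infty$; non-explosion guarantees $\tau_n\to\infty$ a.s.\ and the uniform bound $\|v(\cdot,t)\|_\infty\le e^{k_at}\sup_{s\le t}\|\,|a\nabla P_sf|\,\|_\infty<\infty$ (finite because $f$ is compactly supported and $A$, hence $a$, is smooth, together with the elliptic regularity and the $L^2(S,\mu)$ bound from Theorem \ref{theo:intert}) controls the boundary term so that Fatou/dominated convergence apply.

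The main obstacle is regularity at the zero set of $u$: $h=|u|$ is only Lipschitz, not $\C^2$, so the computation of $\partial_t h$ and $L_a h$ above is only valid on $\{u\neq0\}$. I would handle this in the standard way by replacing $h$ with the smooth approximants $h_\eps:=\sqrt{u^Tu+\eps^2}-\eps$, for which one checks $\partial_t h_\eps\le L_ah_\eps-k_a h_\eps$ holds \emph{everywhere} (the Kato term is still nonnegative, the drift term only improves, and one absorbs the $\eps$-discrepancy $k_a(\sqrt{|u|^2+\eps^2}-\eps-|u|)$ which is $\le0$ for $k_a\ge0$ and, for $k_a<0$, is $O(\eps)$ uniformly on compacts), run the It\^o comparison for $h_\eps$, and let $\eps\to0$. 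A second, more technical point is justifying the It\^o expansion and the interchange of limits when the measure $\mu_a$ is infinite and the process possibly delicate; here the non-explosion hypothesis is exactly what is needed, and the localization by exit times of balls plus the a priori sup-bound on $|a\nabla P_tf|$ over finite time intervals (which follows from parabolic Schauder estimates applied to the equation $\partial_t u=L_au-M_au$ with smooth bounded-below zeroth-order term on compacts) closes the argument. Everything else — the Bochner identity, the Cauchy–Schwarz bound on the Kato term, the eigenvalue bound $u^TM_au\ge\rho_a|u|^2$ — is routine.
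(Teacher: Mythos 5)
Your computation of the differential inequality is right and coincides with the paper's Lemma~\ref{lem2:non-exp}: with $u := a\nabla P_{t-s}f$ and the regularization $h_\eps = \sqrt{|u|^2+\eps^2}-\eps$, the chain rule for $L_a$ together with the Cauchy--Schwarz (Kato) bound $|(\nabla u)^T u|^2 \leq \|\nabla u\|_{HS}^2\,|u|^2$ and the eigenvalue bound $u^T M_a u \geq \rho_a|u|^2 \geq k_a|u|^2$ gives exactly $(\partial_s + L_a)f_s^\eps\geq 0$, i.e.\ $e^{k_a s}h_\eps$ is a subsolution. Up to this point you are on the paper's track.

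The gap is in how you close the argument. You propose to run It\^o's formula along $(X_{s,a}^x)$, localize by exit times $\tau_n$ of balls, and pass $n\to\infty$ using (i) non-explosion of $(X_{t,a}^x)$ and (ii) the a priori bound $\|\,|a\nabla P_sf|\,\|_\infty < \infty$ uniformly for $s\leq t$. Both ingredients are problematic here. First, Theorem~\ref{thm:compar_feynman} does \emph{not} assume non-explosion of the $L_a$-process: that additional hypothesis only appears later, in Theorem~\ref{thm:asym-BL}, precisely so that Theorem~\ref{thm:compar_feynman} can be stated and proved without it and then be used to supply the boundedness needed for the stochastic representation of Theorem~\ref{theo:feynman_kac}. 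Using stochastic completeness of $(P_{t,a})$ at this stage proves a weaker statement than claimed. Second, and more seriously, the pointwise bound $\sup_{s\le t}\|\,|a\nabla P_sf|\,\|_\infty < \infty$ is exactly what Theorem~\ref{thm:compar_feynman} is designed to establish (the paper even remarks, right after the theorem, that ``a consequence of Theorem~\ref{thm:compar_feynman} is the boundedness of the quantity $a\nabla P_tf$''). Compact support of $f$, smoothness of $a$, ellipticity, and the $L^2(S,\mu)$ bound from Theorem~\ref{theo:intert} give local regularity and an $L^2$ control, but \emph{not} a global $L^\infty$ bound on $a\nabla P_tf$ when $a$ is unbounded; parabolic Schauder estimates are also purely local. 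So the justification of the localized It\^o argument is circular.

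The paper avoids both problems by replacing the stochastic comparison with a weak (variational) maximum principle: it tests the subsolution inequality against products $g_1 P_{s,a}(g_2\,\cdot)$ with $g_1,g_2\in\C_0^\infty$, takes $g_2$ along a cutoff sequence $\phi_n\uparrow 1$ with $\|\,|\nabla\phi_n|\,\|_\infty\to 0$ (available because $\R^d$ is complete, no stochastic completeness needed), and controls the remainder via the $L^2(\mu_a)$ estimate on $\|\nabla(a\nabla P_{t-s}f)\|_{HS}$ from Lemma~\ref{lem1:non-exp}. If you want to keep the It\^o route, you would need to prove the $L^\infty$ bound on $a\nabla P_tf$ independently (e.g.\ by first establishing the weak-form result as the paper does), at which point the It\^o proof becomes redundant; otherwise you should move to the integral formulation, supply the analogue of Lemma~\ref{lem1:non-exp} extended to smooth functions in $\D(L)$, and drop the non-explosion hypothesis from your argument.
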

In the case $a \equiv 1$ we recover the classical Bakry-\'Emery criterion, cf. for instance \cite{bakry:non-exp}, asserting that a uniform lower bound $\lambda$ on the Hessian of the potential $V$ ensures the sub-commutation relation
$$
\vert \nabla P_t f \vert \leq e^{-\lambda t} \, P_t (\vert \nabla f \vert ), \quad t\geq 0.
$$
To prove Theorem \ref{thm:compar_feynman}, we will follow the approach emphasized in \cite{bakry:non-exp} and propose an analytic proof divided into several steps. Recall that the norm induced by $L$ and $L_a$ are respectively given by
$$
\Vert f\Vert _{\D (L)} ^2 =  \Vert f \Vert ^2 _{L^2 (\mu)} + \Vert L f \Vert ^2 _{L^2 (\mu)},
$$
and
$$
\Vert f\Vert _{\D (L_a)} ^2 =  \Vert f \Vert ^2 _{L^2 (\mu_a)} + \Vert L_a f \Vert ^2 _{L^2 (\mu_a)}.
$$

\begin{lemme}
\label{lem1:non-exp}
Assume that the smallest eigenvalue $\rho_a$ of the matrix $M_a$ is bounded from below by some real constant $k_a \in \R$. Then for every smooth function $f\in \D (L)$, we have the inequality
\begin{equation}
\label{eq:DL}
\left \Vert \, \Vert \nabla (a\nabla f) \Vert _{HS} \, \right \Vert_{L^2(\mu_a)}^2\leq \left( 1+ \frac{k_a ^-}{2} \right) \Vert f \Vert_{\D(\LL)}^2 ,
\end{equation}
where $k_a ^- := - \min \{ 0, k_a \}$.
\end{lemme}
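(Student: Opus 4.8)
The plan is to specialize the $\Gamma_2$-type identity \eqref{eq:gamma_2A} to the scalar distortion $A = a\,I$ and then to control the two terms it produces. With $A = a\,I$ one has $A^{-1}\,\nabla(A\,\nabla f) = a^{-1}\,\nabla(a\,\nabla f)$, so that $\Vert A^{-1}\,\nabla(A\,\nabla f)\Vert_{HS}^2 = a^{-2}\,\Vert\nabla(a\,\nabla f)\Vert_{HS}^2$; since $d\mu_a = a^{-2}\,d\mu$, the Hilbert--Schmidt term in \eqref{eq:gamma_2A} is precisely $\Vert\,\Vert\nabla(a\,\nabla f)\Vert_{HS}\,\Vert_{L^2(\mu_a)}^2$. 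As $A^{-1}\,M_A\,A = M_a$ in this case, \eqref{eq:gamma_2A} becomes, for $f\in\C_0^\infty(\R^d,\R)$,
\[
\int_{\R^d}(\LL f)^2\,d\mu = \left\Vert\,\Vert\nabla(a\,\nabla f)\Vert_{HS}\,\right\Vert_{L^2(\mu_a)}^2 + \int_{\R^d}(\nabla f)^T\,M_a\,\nabla f\,d\mu .
\]

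Next I would insert the hypothesis. Since the smallest eigenvalue $\rho_a$ of the symmetric matrix $M_a$ satisfies $\rho_a\geq k_a$ pointwise, one has $(\nabla f)^T\,M_a\,\nabla f\geq k_a\,\vert\nabla f\vert^2\geq -k_a^-\,\vert\nabla f\vert^2$, whence
\[
\left\Vert\,\Vert\nabla(a\,\nabla f)\Vert_{HS}\,\right\Vert_{L^2(\mu_a)}^2 \leq \int_{\R^d}(\LL f)^2\,d\mu + k_a^-\int_{\R^d}\vert\nabla f\vert^2\,d\mu .
\]
The Dirichlet energy is then estimated by the graph norm of $\LL$: integration by parts and Young's inequality give $\int_{\R^d}\vert\nabla f\vert^2\,d\mu = -\int_{\R^d}f\,\LL f\,d\mu \leq \Vert f\Vert_{L^2(\mu)}\,\Vert\LL f\Vert_{L^2(\mu)}\leq \tfrac12\,\Vert f\Vert_{\D(\LL)}^2$, while trivially $\int_{\R^d}(\LL f)^2\,d\mu = \Vert\LL f\Vert_{L^2(\mu)}^2\leq \Vert f\Vert_{\D(\LL)}^2$. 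Substituting these two bounds into the previous display yields \eqref{eq:DL} for compactly supported $f$.

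It remains to justify \eqref{eq:gamma_2A} and the integrations by parts for an arbitrary \emph{smooth} $f\in\D(\LL)$ rather than a compactly supported one, and this is the only technical point. I would argue by density: since $\LL$ is essentially self-adjoint, $\C_0^\infty(\R^d,\R)$ is a core, so there is a sequence $(f_n)\subset\C_0^\infty(\R^d,\R)$ with $f_n\to f$ and $\LL f_n\to\LL f$ in $L^2(\mu)$; closedness of the Dirichlet form then forces $\nabla f_n\to\nabla f$ in $L^2(\mu)$, and interior elliptic estimates applied to $\Delta f_n = \LL f_n + (\nabla V)^T\nabla f_n$ give $f_n\to f$ in $H^2_{\mathrm{loc}}$, hence $\nabla(a\,\nabla f_n)\to\nabla(a\,\nabla f)$ in $L^2_{\mathrm{loc}}$ since $a$ is smooth. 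Applying the inequality already established to each $f_n$, letting $n\to\infty$ in the right-hand side (which converges to $(1+k_a^-/2)\,\Vert f\Vert_{\D(\LL)}^2$) and in the left-hand side on each compact set via Fatou's lemma, and finally exhausting $\R^d$ by compacts through monotone convergence, one recovers \eqref{eq:DL} for $f$. I expect this approximation step, and not any of the computations above, to be where the (minor) work lies.
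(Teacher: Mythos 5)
Your derivation of the key inequality for compactly supported $f$ is exactly the paper's: you specialize the $\Gamma_2$-type identity \eqref{eq:gamma_2A} to $A=a\,I$, rewrite the Hilbert--Schmidt term against $d\mu_a = a^{-2}\,d\mu$, use $\rho_a\geq k_a$, and absorb the Dirichlet energy through $\int|\nabla f|^2\,d\mu=-\int f\,\LL f\,d\mu\leq\frac12\int f^2\,d\mu+\frac12\int(\LL f)^2\,d\mu$; the paper does this verbatim, and the resulting constant $1+k_a^-/2$ comes out identically.

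Where you diverge is the extension from $\C_0^\infty$ to smooth elements of $\D(\LL)$, and here the two routes are genuinely different. The paper observes that applying the already-proved inequality to differences $f_n-f_m$ of a core sequence shows $(\Vert\nabla(a\nabla f_n)\Vert_{HS})_n$ is Cauchy in $L^2(\mu_a)$, hence converges to some $K$, and then identifies $K=\nabla(a\nabla f)$ purely algebraically by multiplying by a cutoff $g\in\C_0^\infty$ and passing to the limit in the Leibniz expansion of $\nabla(a\nabla(gf_n))$. You instead invoke interior elliptic regularity to upgrade the core convergence $f_n\to f$, $\LL f_n\to\LL f$ in $L^2(\mu)$ (plus $\nabla f_n\to\nabla f$ from closedness of the form) to $H^2_{\rm loc}$ convergence, which gives $\nabla(a\nabla f_n)\to\nabla(a\nabla f)$ in $L^2_{\rm loc}$, and then pass to the limit on compacts and exhaust. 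Both are correct. The paper's argument is self-contained and purely distributional, relying only on the Cauchy estimate the lemma itself provides; yours imports the elliptic toolbox but is arguably more transparent about \emph{why} the limit is $\nabla(a\nabla f)$ rather than some abstract $K$. One small remark: once you have $L^2_{\rm loc}$ convergence of $\nabla(a\nabla f_n)$, Fatou is not actually needed on the compacts — you can pass to the limit in $\int_K$ directly — but the conclusion is unaffected.
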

\begin{proof}
First, let us start by proving the result for every function $f\in\C ^\infty _0 (\R^d, \R)$. We have
\begin{align*}
\label{eq:gamma_2A}
 \int_{\R ^d} (\LL f )^2 \, d\mu
 & = & \int_{\R ^d} \Vert a^{-1} \, \nabla (a \, \nabla f) \Vert _{HS} ^2 \, d\mu
 &+ \int_{\R ^d} (\nabla f) ^T \,  \left(  \nabla \nabla V \, -  a\, \LL a^{-1} \, I \right) \, \nabla f \, d\mu \\
 & = & \int_{\R ^d} \Vert  \nabla (a \, \nabla f) \Vert _{HS} ^2 \, d\mu_a
 &+ \int_{\R ^d} (\nabla f) ^T \,  \left(  \nabla \nabla V \, -  a\, \LL a^{-1} \, I \right) \, \nabla f \, d\mu \\
 &\geq & \int_{\R ^d} \Vert \nabla (a \, \nabla f) \Vert _{HS} ^2 \, d\mu_a
 &+ k_a \int_{\R ^d} \vert \nabla f \vert ^2 \, d\mu.
\end{align*}
Since we have
\[
\int_{\R ^d} \vert \nabla f \vert ^2 \, d\mu = - \int_{\R ^d}  f  \,  \LL  f \, d\mu \leq \frac{1}{2} \int_{\R^d} f^2 \, d\mu + \frac{1}{2}  \int_{\R^d} (\LL f)^2 \,  d\mu,
\]
the conclusion follows at least when $f\in \C ^\infty _0 (\R^d , \R)$. \vspace{0.1cm} \\
Now let us extend the desired inequality for general smooth functions in $\D (L)$. To do so, given a smooth function $f\in \D (L)$, let $(f_n)_{n\in \N}$ be a sequence of functions in $\C ^\infty _0 (\R^d, \R)$ converging to $f$ for the norm induced by $L$ (the essential self-adjointness property allows the existence of such a sequence). Then it is a Cauchy sequence in $L^2 (\mu)$ and by \eqref{eq:DL} the sequence $(\Vert \nabla (a\nabla f_n) \Vert _{HS})_{n\in \N}$ reveals also to be Cauchy in $L^2 (\mu_a)$, hence converges in $L^2 (\mu_a)$ to (the Hilbert-Schmidt norm of) some matrix-valued function $K$. On the one hand if the limiting function $f$ lies in $\C ^\infty _0 (\R^d , \R)$ then one sees easily that $K= \nabla (a\nabla f)$. On the other hand, if the support of $f$ is not compact, then we proceed as follows. Fix some $g \in\C ^\infty _0 (\R^d, \R)$ and consider the product $g f_n \in \C _0 ^\infty (\R^d , \R)$. Because of the rule
$$
L (g (f_n - f)) = (f_n - f) \, L g + g \, L (f_n -f) + 2 \, (\nabla g) ^T \, \nabla (f_n -f),
$$
we observe immediately that the sequence $ (g f_n)_{n\in \N}$ converges to $g f$ with respect to the norm induced by $L$ and since $gf$ has compact support, one deduces that $\Vert \nabla (a\nabla (gf_n)) \Vert _{HS}$ converges in $L^2 (\mu_a)$ to $\Vert \nabla (a\nabla (gf)) \Vert _{HS}$. Now we have
 \[
\nabla (a\nabla (gf_n)) =  \nabla (a \nabla g) \, f_n +  a \, \nabla f_n \, (\nabla g)^T + a \, \nabla g \, (\nabla f_n) ^T +  \nabla (a \nabla f_n) \, g,
 \]
and since all the above quantities converge (more precisely all the Hilbert-Schmidt norms converge in $L^2 (\mu_a)$), we obtain at the limit
\[
\nabla (a\nabla (gf)) =  \nabla (a \nabla g) \, f +  a \, \nabla f \, (\nabla g)^T + a \, \nabla g \, (\nabla f) ^T +  K \, g.
 \]
Finally, the uniqueness of the limit and the fact that the equality above holds for every function $g \in\C ^\infty _0 (\R^d, \R)$, one infers that $K = \nabla (a \nabla f)$ and thus \eqref{eq:DL} immediately holds in full generality. The proof is now achieved.
\end{proof}
The reason why the result in Lemma \ref{lem1:non-exp} has to be extended to smooth functions in $\D (L)$ is that we will apply in a moment such a result for $f = P_t g$, which is a smooth function by ellipticity of the semigroup but not compactly supported, even if $g$ is. \vspace{0.1cm}

The second preliminary result required to establish Theorem \ref{thm:compar_feynman} is the following lemma.
\begin{lemme}
\label{lem2:non-exp}
Assume that the smallest eigenvalue $\rho_a$ of the matrix $M_a$ is bounded from below by some real constant $k_a \in \R$. Let $f\in\C ^\infty _0 (\R^d, \R)$ and $ t > 0$ be a finite time horizon. For $0\leq s\leq t$ and $\ve >0$, the smooth function
$$
f_s ^\ve := \sqrt{e^{- 2 k_a s} \, \vert a \, \nabla P_{t-s}f \vert ^2 + \ve^2 } -\ve ,
$$
satisfies the inequality
$$
\LL_a f_s ^\ve + \partial _s f_s ^\ve \geq 0 , \quad 0 \leq s \leq t .
$$
\end{lemme}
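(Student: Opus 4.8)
The plan is to prove the differential inequality directly, using that $\LL_a = \Delta - (\nabla V_a)^T \, \nabla$ is a genuine diffusion operator (so the chain rule is available) and that, by Theorem~\ref{theo:intert} (equivalently the operator identity~\eqref{eq:intert_gen} with $A = a\,I$), the vector field $F_s := a\,\nabla P_{t-s}f = \Q_{t-s,a}^{M_a}(a\,\nabla f)$ solves $\partial_s F_s = -\L_a^{M_a} F_s = -\LL_a F_s + M_a F_s$, where on the right $\LL_a$ acts coordinate by coordinate — this is exactly the interpretation of the diagonal operator $\L_a$ as the scalar diffusion $\LL_a$. Since $f$ is smooth and compactly supported, $F_s$ is smooth on $[0,t] \times \R^d$ by ellipticity of the operator $\LL$. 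Setting $h_s := e^{-2k_a s}\,\vert F_s\vert^2$ and $\Phi(u) := \sqrt{u + \ve^2} - \ve$, we have $f_s^\ve = \Phi(h_s)$, which is smooth because $h_s + \ve^2 \geq \ve^2 > 0$.

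First I would apply the chain rule for $\LL_a$ and for $\partial_s$ to obtain
$$
\LL_a f_s^\ve + \partial_s f_s^\ve = \Phi'(h_s) \, \bigl( \LL_a h_s + \partial_s h_s \bigr) + \Phi''(h_s) \, \vert \nabla h_s \vert^2 .
$$
Since $\Phi'(h_s) = \tfrac{1}{2\sqrt{h_s + \ve^2}} > 0$ and $\Phi''(h_s) = -\tfrac{1}{4(h_s + \ve^2)^{3/2}}$, the statement reduces to the pointwise inequality $2\,(h_s + \ve^2)\,(\LL_a h_s + \partial_s h_s) \geq \vert \nabla h_s \vert^2$. To estimate the first factor I would use the coordinate-wise Bochner identity $\LL_a(\vert F_s\vert^2) = 2\, F_s^T \LL_a F_s + 2\, \Vert \nabla F_s \Vert_{HS}^2$ together with $\partial_s(\vert F_s\vert^2) = 2\, F_s^T \partial_s F_s$ and the evolution equation above; the terms containing $\LL_a F_s$ cancel, leaving
$$
\LL_a h_s + \partial_s h_s = 2\, e^{-2k_a s} \Bigl( \Vert \nabla F_s \Vert_{HS}^2 - k_a \, \vert F_s \vert^2 + F_s^T M_a F_s \Bigr) \geq 2\, e^{-2k_a s} \, \Vert \nabla F_s \Vert_{HS}^2 ,
$$
the last bound coming from $F_s^T M_a F_s \geq \rho_a \, \vert F_s \vert^2 \geq k_a \, \vert F_s \vert^2$; note it is essential to keep $\rho_a \geq k_a$ rather than $\rho_a \geq 0$, so that the sign of $k_a$ plays no role.

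Then I would bound the gradient term: from $\nabla(\vert F_s\vert^2) = 2\sum_i F_{s,i}\,\nabla F_{s,i}$ and Cauchy--Schwarz,
$$
\vert \nabla h_s \vert^2 = e^{-4k_a s} \, \bigl\vert \nabla(\vert F_s\vert^2) \bigr\vert^2 \leq 4\, e^{-4k_a s} \, \vert F_s\vert^2 \, \Vert \nabla F_s \Vert_{HS}^2 = 4\, e^{-2k_a s} \, h_s \, \Vert \nabla F_s \Vert_{HS}^2 ,
$$
and combining with the previous display gives $2\,(h_s + \ve^2)\,(\LL_a h_s + \partial_s h_s) \geq 4\, h_s \, e^{-2k_a s} \, \Vert \nabla F_s \Vert_{HS}^2 \geq \vert \nabla h_s \vert^2$, which is what we wanted; here $\ve > 0$ only helps, since $h_s + \ve^2 \geq h_s \geq 0$, its true purpose being to regularize the Euclidean norm so that $f_s^\ve$ is smooth and the chain rule is legitimate. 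Rather than a real obstacle, the delicate point is the coordinate-wise differential calculus for the vector field $F_s$ — in particular the validity of $\partial_s F_s = -\L_a^{M_a} F_s$ and the requisite smoothness — all of which is supplied by Theorem~\ref{theo:intert} under the standing hypothesis that $M_a = A^{-1} \, M_A \, A$ is bounded below by $k_a$.
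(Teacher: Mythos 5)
Your proof is correct and follows essentially the same route as the paper's. Both proofs rest on the same three ingredients applied to $F_s := a\,\nabla P_{t-s}f$: the operator-level intertwining $a\,\nabla \LL g = (\L_a - M_a)(a\,\nabla g)$ giving $\partial_s F_s = -\L_a F_s + M_a F_s$, the coordinate-wise Bochner identity $\LL_a(|F_s|^2) = 2\,F_s^T \L_a F_s + 2\,\Vert\nabla F_s\Vert_{HS}^2$ (so the $\L_a$-terms cancel), and Cauchy--Schwarz for $|\nabla(|F_s|^2)|^2 \leq 4\,|F_s|^2\,\Vert\nabla F_s\Vert_{HS}^2$, all combined with $F_s^T M_a F_s \geq k_a\,|F_s|^2$. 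Your presentation is slightly more streamlined in that you factor through $\Phi(u) = \sqrt{u+\ve^2}-\ve$ and reduce everything to the single polynomial inequality $2(h_s+\ve^2)(\LL_a h_s + \partial_s h_s) \geq |\nabla h_s|^2$, whereas the paper computes $\partial_s f_s^\ve$ and $\LL_a f_s^\ve$ separately and adds; but the computations performed are the same. One small remark: you do not actually need the semigroup-level Theorem~\ref{theo:intert} (nor the solution identity $F_s = \Q_{t-s,a}^{M_a}(a\nabla f)$); the generator-level identity~\eqref{eq:intert_gen} applied to the smooth function $P_{t-s}f$ already yields $\partial_s F_s = -\L_a^{M_a}F_s$, which is all the paper uses and is slightly lighter.
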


\begin{proof}
Using the intertwining relation at the level of the generators, that is, for every $g\in C^\infty (\R^d , \R)$,
$$
a \, \nabla \LL g = (\L_a - M_a) (a \, \nabla g) ,
$$
we have
\begin{eqnarray*}
\partial _s f_s ^\ve & = & \frac{e^{- 2 k_a s} \left(-k_a \, |a\nabla P_{t-s}f|^2- (a \, \nabla P_{t-s}f) ^T \, a \, \nabla \LL P_{t-s}f \right)}{ \sqrt{e^{- 2 k_a s} \, \vert a \, \nabla P_{t-s}f \vert ^2 + \ve^2}} \\
& \geq & \frac{e^{- 2 k_a s} \,  \left( (\rho_a-k_a) \, \vert a \, \nabla P_{t-s}f \vert ^2-  (a \, \nabla P_{t-s}f) ^T \, \L_a (a \, \nabla  P_{t-s}f)\right) }{ \sqrt{e^{- 2 k_a s} \, \vert a \, \nabla P_{t-s}f \vert ^2 + \ve^2}} ,
\end{eqnarray*}
and using the diffusion property,
$$
L_a (\sqrt u) = \frac{1}{2  \sqrt u } \, L_a u - \frac{1}{ 4  u^{3/2} } \, \vert \nabla u \vert ^2 ,
$$
one gets
\begin{align*}
L_a f_s ^\ve =
&\frac{e^{- 2 k_a s} \, L_a ( \vert a \, \nabla P_{t-s}f \vert ^2)}{ 2 \, \sqrt{e^{- 2 k_a s} \, \vert a \, \nabla P_{t-s}f \vert ^2 + \ve^2 }}
- \frac{e^{- 4 k_a s} \, \vert a \, \nabla P_{t-s}f \vert ^2 \, \Vert \nabla (a\nabla P_{t-s}f) \Vert _{HS} ^2}{  \left( e^{- 2 k_a s} \, \vert a \, \nabla P_{t-s}f \vert ^2 + \ve^2\right)^\frac{3}{2}} .\\
\end{align*}
Since
\[
L_a ( \vert a\, \nabla P_{t-s}f \vert ^2 ) = 2 \, (a \, \nabla P_{t-s}f) ^T \, \L_a (a \, \nabla P_{t-s}f) + 2 \, \Vert \nabla (a\nabla P_{t-s}f) \Vert _{HS} ^2 ,
\]
one finds
\[
L_a f_s ^\ve = \frac{e^{- 2 k_a s} \, (a \, \nabla P_{t-s}f) ^T \, \L_a (a\, \nabla P_{t-s}f)}{\sqrt{e^{- 2 k_a s} \, \vert a \, \nabla P_{t-s}f \vert ^2 + \ve^2}} + \frac{\ve ^2 \, e^{- 2 k_a s} \, \Vert \nabla (a\nabla P_{t-s}f) \Vert _{HS} ^2 }{\left( e^{- 2 k_a s} \, \vert a \, \nabla P_{t-s}f \vert ^2 + \ve^2\right) ^{3/2}} ,
\]
and therefore combining the two expressions above entail the inequality
\[
L_a f_s ^\ve + \partial _s f_s ^\ve \geq \frac{e^{- 2 k_a s} \, (\rho_a-k_a) \, \vert a \, \nabla P_{t-s}f \vert ^2} {\sqrt{e^{- 2 k_a s} \, \vert a \, \nabla P_{t-s}f \vert ^2 + \ve^2}} ,
\]
and since $\rho_a \geq k_a$ the desired conclusion follows.
\end{proof}

Once Lemmas \ref{lem1:non-exp} and \ref{lem2:non-exp} are established, we are now able to provide a detailed proof of Theorem \ref{thm:compar_feynman}.
\begin{proof}[Proof of Theorem \ref{thm:compar_feynman}]
Letting two non-negative functions $g_1 , g_2 \in \C _0 ^\infty (\R^d , \R)$, we fix a finite time horizon $t >0$ and consider the function of time
\[
\psi ^\ve (s) := \int_{\R^d} g_1 \, P_{s,a} (g_2 \, f_s ^\ve) \, d\mu_a  = \int_{\R^d} P_{s,a} g_1 \, g_2 \, f_s ^\ve \, d\mu_a , \quad 0 \leq s \leq t,
\]
where $f^\ve$ is the function defined in Lemma \ref{lem2:non-exp}. Since $g_1 , g_2$ are non-negative, one has by Lemma \ref{lem2:non-exp},
\begin{eqnarray*}
\partial _s \psi ^\ve (s) & = & \int_{\R^d} L_a P_{s,a} g_1 \, g_2 \, f_s ^\ve \, d\mu_a + \int_{\R^d} P_{s,a} g_1 \, g_2 \, \partial _s f_s ^\ve \, d\mu_a \\
& \geq & \int_{\R^d} L_a P_{s,a} g_1 \, g_2 \, f_s ^\ve \, d\mu_a - \int_{\R^d} P_{s,a} g_1 \, g_2 \, L_a f_s ^\ve \, d\mu_a \\
& = & - \int_{\R^d} (\nabla P_{s,a} g_1 )^T \, \nabla (g_2 \, f_s ^\ve ) \, d\mu_a + \int_{\R^d} (\nabla (P_{s,a} g_1 \, g_2) ) ^T \, \nabla f_s ^\ve \, d\mu_a \\
& = & - \int_{\R^d} (\nabla P_{s,a} g_1) ^T \, \nabla g_2 \, f_s ^\ve \, d\mu_a +  \int_{\R^d} P_{s,a} g_1 \, (\nabla g_2) ^T \, \nabla f_s ^\ve \, d\mu_a ,
\end{eqnarray*}
leading by Cauchy-Schwarz' inequality to the estimate:
\begin{eqnarray*}
\partial _s \psi ^\ve (s) & \geq & - \Vert \vert \nabla g_2 \vert \Vert_\infty  \left( \Vert \vert \nabla f_s ^\ve \vert \Vert_{L^2(\mu_a) } \, \Vert P_{s,a} g_1 \Vert_{L^2(\mu_a)} +  \Vert f_s ^\ve \Vert_{L^2(\mu_a) } \, \Vert \vert \nabla P_{s,a} g_1 \vert \Vert_{L^2(\mu_a)} \right) \\
& \geq & - \Vert \vert \nabla g_2 \vert \Vert_\infty  \left( \Vert \vert \nabla f_s ^\ve \vert \Vert_{L^2(\mu_a) } \, \Vert g_1 \Vert_{L^2(\mu_a)} +  \Vert f_s ^\ve \Vert_{L^2(\mu_a) } \, \Vert \vert \nabla g_1 \vert \Vert_{L^2(\mu_a)} \right) .
\end{eqnarray*}
To obtain the last inequality we used the contractivity of the semigroup in $L^2(\mu_a)$ and the fact that the function
$$
s\mapsto \int_{\R^d} \vert \nabla P_{s,a} g_1 \vert ^2 \, d\mu_a = - \int_{\R^d} P_{s,a} g_1 \, L_a P_{s,a} g_1 \, d\mu_a ,
$$
is non-increasing. Now, since we have $0 \leq f_s ^\ve \leq e^{- k_a s} \, \vert a \, \nabla P_{t-s} f \vert $, one gets
\begin{eqnarray*}
\int_{\R^d} {f_s ^\ve} ^2 \, d\mu_a & \leq & e^{-2 k_a s} \, \int_{\R^d} \vert a \, \nabla P_{t-s}f \vert ^2 \, d\mu_a \\
& = & e^{-2 k_a s} \, \int_{\R^d} \vert \nabla P_{t-s}f \vert ^2 \, d\mu \\
& \leq & e^{-2 k_a s} \, \int_{\R^d} \vert \nabla f \vert ^2 \, d\mu .
\end{eqnarray*}
Moreover we have
 \[
 \nabla f_s ^\ve = \frac{e^{- 2 k_a s} \, (a\, \nabla P_{t-s}f) ^T \, \nabla (a \, \nabla P_{t-s}f)}{\sqrt{e^{- 2 k_a s} \, \vert a \, \nabla P_{t-s}f \vert ^2 + \ve^2}},
 \]
and by Cauchy-Schwarz' inequality, we obtain
\[
\vert \nabla f_s ^\ve \vert \leq e^{-k_a s} \, \Vert \nabla (a \, \nabla P_{t-s}f) \Vert _{HS}.
\]
The key point then is to apply Lemma \ref{lem1:non-exp} to the semigroup $P_{t-s} f$, which is a smooth element of $\D (L)$. We thus get
\begin{eqnarray*}
\Vert \vert \nabla f_s ^\ve \vert \Vert_{L^2(\mu_a)} ^2 & \leq & e^{-2 k_a s} \, \left( 1+ \frac{k_a ^-}{2} \right) \Vert P_{t-s} f \Vert_{\D(\LL)}^2 \\
& \leq & e^{-2 k_a s} \, \left( 1+ \frac{k_a ^-}{2} \right) \Vert f \Vert_{\D(\LL)}^2 .
\end{eqnarray*}
Combining the estimates above entails the existence of a smooth and positive function of time $c$, depending only on $k_a$, such that we have
\[
\partial _s \psi ^\ve (s) \geq - c(s) \, \Vert \vert \nabla g_2 \vert \Vert_\infty \, \Vert f\Vert _{\D(L)} \, \Vert g_1\Vert _{\D(L_a)} .
\]
Integrating the above inequality between $0$ and $t$ and taking for $g_2$ an element $\phi_n \in \C ^\infty _0 (\R ^d , \R)$ valued between 0 and 1 and such that $\phi_n \uparrow 1$ pointwise and $\Vert \vert \nabla \phi_n \vert \Vert_\infty \to 0$ as $n\to \infty$ (recall that the existence of such a sequence is provided by the completeness property of the Euclidean space), we obtain at the limit $n \to \infty$:
\[
\int_{\R^d} g_1 \, \left( P_{t,a} f_t ^\ve - f_0 ^\ve \right) \, d\mu_a \geq 0.
\]
Finally, since the inequality above holds for every $g_1 \in \C_0 ^\infty (\R^d, \R)$ we get $P_{t,a} f_t ^\ve \geq f_0 ^\ve$ and letting $\ve \to 0$ achieves the proof of Theorem \ref{thm:compar_feynman}.
\end{proof}
A consequence of Theorem \ref{thm:compar_feynman} is the boundedness of the quantity $a \, \nabla P_t f$ as soon as $f\in \C_0 ^\infty (\R^d , \R)$, a property required to prove Theorem \ref{thm:asym-BL}. However the inequality \eqref{eq:sub} itself is not sufficient (even if $k_a >0$) since the desired estimate
$$
a \, \vert \nabla P_t f \vert \leq P_{t,a} ^{\rho_a} (\vert a\nabla f \vert ), \quad t \geq 0,
$$
is stronger and cannot be obtained by using the same approach. Note that it rewrites according to the intertwining relation of Theorem \ref{theo:intert} as
$$
\vert Q_{t,a} ^{M_a} (a \, \nabla f ) \vert \leq P_{t,a} ^{\rho_a} (\vert a\nabla f \vert ).
$$
Here $(P_{t,a} ^{\rho_a})_{t\geq 0}$ is the Feynman-Kac semigroup acting on functions as follows: for every $f\in \C _0 ^\infty (\R^d , \R)$,
$$
P_{t,a} ^{\rho _a} f (x) := \E \left[ f(X_{t,a} ^x) \, e^{- \int_0 ^t \rho _a (X_{s,a} ^x) \, ds }\right] ,
$$
and its generator is given by $L_a ^{\rho_a} = L_a - \rho_a \, I$. Actually, in order to obtain such a stronger estimate above, we need a uniqueness result in the $L^\infty$ Cauchy problem, i.e. when the solution is bounded, and this is the reason why we have to assume the non-explosion of the underlying process $(X_{t,a} ^x)_{t\geq 0}$, an assumption encoded in the statement of Theorem \ref{thm:asym-BL} by a uniform lower bound on the matrix $\nabla \nabla V_a$ or a Lyapunov condition (recall that by \cite{khas} the non-explosion is classically equivalent to the uniqueness in the $L^\infty$ Cauchy problem). To follow this strategy, we use the stochastic representation of the semigroup $(Q_{t,a} ^{M_a})_{t\geq 0}$. If the process $(X_{t,a} ^x)_{t\geq 0}$ does not explode, i.e. it has an infinite lifetime, we consider the matrix-valued process $(Y_{t,a,x} )_{t\geq 0} \subset \M _{d\times d } (\R )$ solution to the equation
$$
\partial_t Y_{t,a, x} +  Y_{t,a, x} \, M_a (X_{t,a} ^x )  = 0, \quad Y_{0,a, x} = I .
$$
We first have the following lemma, allowing us to control the Euclidean operator norm $\vert \cdot \vert _{\rm{op}}$ on $\M _{d\times d } (\R)$ of the matrix $Y_{t,a, x} $.
\begin{lemme}
\label{lemme:Y}
Assume that the smallest eigenvalue $\rho_a$ of the matrix $M_a$ is bounded from below by some real constant and that the process $(X_{t,a} ^x)_{t\geq 0}$ is non-explosive. Then we have the estimate
$$
\vert Y_{t,a, x} \vert _{\rm{op}} \leq e^{- \int_0 ^t \rho_a (X_{s,a} ^x) \, ds}, \quad t \geq 0, \quad x\in \R ^d .
$$
\end{lemme}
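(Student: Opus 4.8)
The plan is to prove the bound pathwise, taking advantage of the fact that for the choice $A=a\,I$ the matrix $M_a=A^{-1}M_AA$ is symmetric, so that $M_a(X_{s,a}^x)\geq\rho_a(X_{s,a}^x)\,I$ in the sense of symmetric matrices for every $s\geq 0$. First I would fix $x\in\R^d$ and work along a fixed trajectory of the process $(X_{t,a}^x)_{t\geq 0}$, which is globally defined by the non-explosion hypothesis. Since $a$ and $V$ are smooth, the map $s\mapsto M_a(X_{s,a}^x)$ is continuous on $[0,\infty)$, hence the linear matrix ODE $\partial_t Y_{t,a,x}=-Y_{t,a,x}\,M_a(X_{t,a}^x)$ with $Y_{0,a,x}=I$ has a unique solution of class $C^1$ on all of $[0,\infty)$, which makes the definition of $(Y_{t,a,x})_{t\geq 0}$ legitimate.

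The main step is a transposition trick. Since the Euclidean operator norm is invariant under transposition, $\vert Y_{t,a,x}\vert_{\rm op}=\vert Y_{t,a,x}^T\vert_{\rm op}$, so it suffices to estimate $\vert Y_{t,a,x}^T u\vert$ for an arbitrary unit vector $u\in\R^d$. Setting $v_t:=Y_{t,a,x}^T u$ and transposing the defining equation, the symmetry of $M_a$ gives the \emph{closed} linear equation $\dot v_t=-M_a(X_{t,a}^x)^T v_t=-M_a(X_{t,a}^x)\,v_t$; this decoupling is precisely where symmetry is used, since the same computation applied to $Y_{t,a,x}v$ does not close up. Consequently
\[
\frac{d}{dt}\vert v_t\vert^2=2\,v_t^T\dot v_t=-2\,v_t^T M_a(X_{t,a}^x)\,v_t\leq-2\,\rho_a(X_{t,a}^x)\,\vert v_t\vert^2,
\]
and Gronwall's lemma together with $\vert v_0\vert=\vert u\vert=1$ yields $\vert v_t\vert^2\leq\exp\bigl(-2\int_0^t\rho_a(X_{s,a}^x)\,ds\bigr)$. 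Taking the supremum over unit vectors $u$ gives $\vert Y_{t,a,x}^T\vert_{\rm op}\leq\exp\bigl(-\int_0^t\rho_a(X_{s,a}^x)\,ds\bigr)$, and transposing back delivers the claimed estimate.

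The argument is short and the computations are routine; the only genuine point requiring care is the symmetry of $M_a$ — automatic here because $A$ is a scalar multiple of the identity — without which the transposed equation for $v_t$ would not be autonomous in $v_t$ and the Gronwall step would fail. The hypothesis that $\rho_a$ be bounded below by a real constant is not strictly needed for this pathwise bound, continuity of $\rho_a$ along the trajectory being enough to make the integral finite for each finite $t$; it is kept in the statement because it is used in the subsequent application.
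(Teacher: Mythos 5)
Your proof is correct and follows essentially the same route as the paper: both pass to $v_t = Y_{t,a,x}^T u$, use the symmetry of $M_a$ (automatic here since $A = a\,I$) to close the transposed equation, differentiate $\vert v_t\vert^2$, bound the quadratic form by $\rho_a\vert v_t\vert^2$, and conclude by Gronwall and a supremum over unit vectors. The additional observations on well-posedness of the linear ODE along a continuous trajectory and on the dispensability of the uniform lower bound on $\rho_a$ for this particular lemma are accurate but not needed.
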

\begin{proof}
We have for every $u\in \R ^d$,
\begin{eqnarray*}
\partial_t \vert Y_{t,a, x} ^T \, u \vert ^2 & = & 2 \, u^T \, Y_{t,a, x} \, \partial _t Y_{t,a, x} ^T \, u \\
& = & - 2 \, u^T \, Y_{t,a, x} \, M_a (X_{t,a} ^x) \, Y_{t,a, x} ^T \, u \\
& \leq & - 2 \, \rho _a (X_{t,a} ^x) \, \vert Y_{t,a, x} ^T \, u \vert ^2 ,
\end{eqnarray*}
so that integrating and taking the supremum over all $u\in \R ^d$ such that $\vert u\vert =1$ entails the inequality
$$
\vert Y_{t,a, x} \vert _{\rm{op}} = \vert Y_{t,a, x} ^T \vert _{\rm{op}} \leq e^{- \int_0 ^t \rho_a (X_{s,a} ^x) \, ds } .
$$
The proof is achieved.
\end{proof}
Next we state the desired uniqueness result in the $L^\infty$ Cauchy problem.
\begin{prop}
\label{prop:feynman}
Assume that the matrix $M_a$ is uniformly bounded from below and that the process $(X_{t,a} ^x)_{t\geq 0}$ is non-explosive. Let $F$ be a smooth vector field which is locally bounded in time and bounded with respect to the space variable. If such a $F$ is solution to the $L^\infty$ Cauchy problem
\[
\left \{
\begin{array}{ccc}
\partial _t F & = & \L _a ^{M_a} F \\
F(\cdot , 0) & = & G
\end{array}
\right.
\]
where the initial condition $G$ is bounded, then $F$ admits the stochastic representation
\begin{equation}
\label{eq:stochastic}
F(x,t) = \E \left[ Y_{t,a,x} \, G(X_{t,a} ^x ) \right] , \quad t \geq 0, \quad x\in \R ^d.
\end{equation}
\end{prop}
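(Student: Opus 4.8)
The plan is to realize \eqref{eq:stochastic} as the statement that a suitable continuous martingale on $[0,t]$ joins $F(x,t)$ at time $0$ to $Y_{t,a,x}\,G(X_{t,a}^x)$ at time $t$; this is the path-wise counterpart of the intertwining $a\,\nabla L = (\L_a - M_a)(a\,\nabla\,\cdot)$. Fix $x\in\R^d$ and a finite horizon $t>0$. Since $M_a$ is uniformly bounded from below there is $k_a\in\R$ with $\rho_a\geq k_a$, and since $(X_{s,a}^x)_{s\geq 0}$ is non-explosive the random linear equation $\partial_s Y_{s,a,x}=-Y_{s,a,x}\,M_a(X_{s,a}^x)$, $Y_{0,a,x}=I$, has a unique global solution along the trajectory; it is adapted, continuous and of finite variation, and by Lemma \ref{lemme:Y} it satisfies $\vert Y_{s,a,x}\vert_{\rm{op}}\leq e^{-\int_0^s\rho_a(X_{u,a}^x)\,du}\leq e^{k_a^- t}$ for $s\in[0,t]$.

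Next I would introduce the $\R^d$-valued process $N_s:=Y_{s,a,x}\,F(X_{s,a}^x,t-s)$, $0\leq s\leq t$, and differentiate it by It\^o's formula. Since $F$ is smooth it is in particular $C^{2,1}$, so $s\mapsto F(X_{s,a}^x,t-s)$ is a semimartingale whose bounded-variation part equals $\big(L_aF-\partial_t F\big)(X_{s,a}^x,t-s)\,ds$ (the time derivative of $F$ being evaluated at $t-s$, and $L_a$, the generator of $X_{\cdot,a}^x$, acting coordinate-wise) and whose martingale part is $\sqrt2\,(\nabla F)(X_{s,a}^x,t-s)\,dB_s$; as $Y_{\cdot,a,x}$ has zero quadratic variation, the ordinary product rule gives $dN_s$. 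Plugging in $\partial_s Y_{s,a,x}=-Y_{s,a,x}M_a(X_{s,a}^x)$ together with the equation $\partial_t F=\L_a^{M_a}F=L_aF-M_aF$ satisfied by $F$, the two $ds$-contributions cancel exactly, so that $N$ is a continuous local martingale, with $N_0=Y_{0,a,x}\,F(x,t)=F(x,t)$ (deterministic) and $N_t=Y_{t,a,x}\,F(X_{t,a}^x,0)=Y_{t,a,x}\,G(X_{t,a}^x)$. This computation I expect to be routine.

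The main obstacle is to upgrade $N$ from a \emph{local} martingale to a \emph{true} martingale, since a priori $Y_{t,a,x}$ (equivalently $Q_{t,a}^{M_a}$) need not be bounded; this is precisely why non-explosion and a uniform lower bound on $M_a$ are assumed. Here I would combine the two bounds from the first step: $\vert Y_{s,a,x}\vert_{\rm{op}}\leq e^{k_a^- t}$ on $[0,t]$ and $C:=\sup_{0\leq s\leq t}\Vert\,\vert F(\cdot,s)\vert\,\Vert_\infty<\infty$ by hypothesis, which together yield the deterministic estimate $\vert N_s\vert\leq C\,e^{k_a^- t}$ for every $s\in[0,t]$. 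A bounded continuous local martingale on a compact time interval is a uniformly integrable martingale, so $\E[N_0]=\E[N_t]$, that is $F(x,t)=\E\big[Y_{t,a,x}\,G(X_{t,a}^x)\big]$, which is \eqref{eq:stochastic}. The only other point needing a word is regularity of $F$ up to the initial time, ensured by the smoothness assumption (or, if only interior smoothness is invoked, by running the argument on $[\delta,t]$ and letting $\delta\to0$, using continuity of $F$ at time $0$).
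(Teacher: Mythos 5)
Your proof is correct and follows essentially the same route as the paper: both define the process $Z_s := Y_{s,a,x}\,F(X_{s,a}^x,\,t-s)$, apply the vectorial It\^o formula to see that the $ds$-drift vanishes precisely because $\partial_t F = L_a F - M_a F$ and $\partial_s Y_{s,a,x}=-Y_{s,a,x}M_a(X_{s,a}^x)$, and then upgrade the resulting local martingale to a true one by combining the operator-norm bound on $Y$ from Lemma \ref{lemme:Y} with the assumed boundedness of $F$. The only difference is that you spell out the uniform bound $\vert Z_s\vert\leq C\,e^{k_a^- t}$ and the passage from bounded local martingale to martingale more explicitly, which the paper leaves implicit.
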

\begin{proof}
The proof relies on a martingale method. Let $t >0$ be a finite time horizon and define the $\R^d$-valued process
$$
Z_{s,a,x} := Y_{s,a,x} \, F(X_{s,a} ^x , t-s), \quad s\in [0,t],
$$
where $F$ is such a solution. By Lemma \ref{lemme:Y} and our assumptions the process $(Z_{s,a,x})_{s\in [0,t]}$ is bounded. Since $Z_{0,a,x} = F(x,t)$ and $Z_{t,a,x} = Y_{t,a,x} \,G (X_{t,a} ^x)$, the identity \eqref{eq:stochastic} we want to establish rewrites as
$$
Z_{0,a,x} = \E \left[ Z_{t ,a,x}\right] .
$$
By the vectorial It\^o formula, we get
$$
d Z_{s,a,x} = dM_s + Y_{s,a,x} \, ( - \partial_s + \L _a ) \, F(X_{s,a} ^x , t-s) \, ds - Y_{s,a, x} \, M_a (X_{s,a} ^x ) \, F(X_{s,a} ^x , t-s) \, ds ,
$$
where $(M_s)_{s\in [0,t]}$ is a $\R^d$-valued local martingale. Since we have $\partial_s F = \L _a ^{M_a} F$, the process $(Z_{s,a,x})_{s\in [0,t]}$ itself is a local martingale and actually a true martingale by boundedness. Finally equating the expected values at $s = 0$ and $s = t$ entails the desired result.
\end{proof}

Now we are in position to refine Theorem \ref{thm:compar_feynman}, at the price of the additional assumption of non-explosion of the process $(X_{t,a} ^x)_{t\geq 0}$. Our result stands as follows.
\begin{theo}
\label{theo:feynman_kac}
Assume that the matrix $M_a$ is uniformly bounded from below and that the process $(X_{t,a} ^x)_{t\geq 0}$ is non-explosive. Then for every $f\in \C _0 ^\infty (\R ^d , \R)$, we have the identity
$$
a (x) \, \nabla P_t f (x) = Q_{t,a} ^{M_a} (a\, \nabla f) (x) = \E \left[ Y_{t,a, x} \, a (X_{t,a} ^x) \, \nabla f (X_{t,a} ^x) \right], \quad x\in \R ^d, \quad t \geq 0.
$$
In particular we have a refinement of Theorem \ref{thm:compar_feynman}: for every $f\in \C _0 ^\infty (\R ^d , \R)$, the following sub-intertwining holds:
$$
\vert a \, \nabla P_t f \vert = \vert Q_{t,a} ^{M_a} (a\, \nabla f) \vert \leq P_{t,a} ^{\rho_a} (\vert a \, \nabla f \vert ) .
$$
\end{theo}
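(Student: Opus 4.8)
The plan is to recognize the vector field $F(x,t) := a(x)\,\nabla P_t f(x)$ as a bounded solution of the $L^\infty$ Cauchy problem governed by the Schr\"odinger operator $\L_a^{M_a}$, and then to read off its stochastic representation from Proposition~\ref{prop:feynman}. The first equality $a\,\nabla P_t f = Q_{t,a}^{M_a}(a\,\nabla f)$ is nothing but Theorem~\ref{theo:intert} specialized to $A = a\,I$: in that case $A^{-1}\,M_A\,A = M_a$ is automatically symmetric and, by hypothesis, uniformly bounded from below, so the theorem applies directly.

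To identify the second expression I would verify that $F$ satisfies every requirement of Proposition~\ref{prop:feynman}. Smoothness of $F$ on $\R^d\times(0,\infty)$ follows from the smoothness of $a$ and the ellipticity of $L$, and at $t=0$ one has $F(\cdot,0) = a\,\nabla f =: G$, a bounded datum since $a\nabla f$ is continuous with compact support. Differentiating in time and invoking the operator intertwining \eqref{eq:intert_gen}, $\partial_t F = a\,\nabla L P_t f = \L_a^{M_a}(a\,\nabla P_t f) = \L_a^{M_a}F$, so $F$ solves the Cauchy problem. The decisive point is that $F$ is locally bounded in time and bounded in space: here Theorem~\ref{thm:compar_feynman} provides $a\,\vert\nabla P_t f\vert \leq e^{-k_a t}\,P_{t,a}(\vert a\nabla f\vert) \leq e^{-k_a t}\,\Vert a\nabla f\Vert_\infty$, where $k_a$ denotes a real lower bound on $\rho_a$ and the last inequality uses $P_{t,a}1\leq 1$ together with the compact support of $a\nabla f$; consequently $\sup_{t\in[0,T]}\Vert F(\cdot,t)\Vert_\infty \leq e^{\vert k_a\vert T}\,\Vert a\nabla f\Vert_\infty < \infty$ for every finite $T>0$. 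Proposition~\ref{prop:feynman} then yields $F(x,t) = \E[Y_{t,a,x}\,G(X_{t,a}^x)] = \E[Y_{t,a,x}\,a(X_{t,a}^x)\,\nabla f(X_{t,a}^x)]$, which is the claimed identity.

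For the sub-intertwining I would pass the Euclidean norm inside the expectation and bound the matrix action pointwise, using $\vert Y_{t,a,x}\,v\vert \leq \vert Y_{t,a,x}\vert_{\rm{op}}\,\vert v\vert$ together with Lemma~\ref{lemme:Y}, which gives $\vert Y_{t,a,x}\vert_{\rm{op}} \leq e^{-\int_0^t \rho_a(X_{s,a}^x)\,ds}$. This produces
$$
\vert a\,\nabla P_t f(x)\vert \leq \E\left[ e^{-\int_0^t \rho_a(X_{s,a}^x)\,ds}\,\vert a(X_{t,a}^x)\,\nabla f(X_{t,a}^x)\vert\right] = P_{t,a}^{\rho_a}(\vert a\nabla f\vert)(x),
$$
the right-hand side being exactly the Feynman-Kac semigroup $P_{t,a}^{\rho_a}$ evaluated at $\vert a\nabla f\vert$. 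I expect the main obstacle to be not any single calculation but the careful check that the candidate $F$ meets all the hypotheses of Proposition~\ref{prop:feynman} — in particular the uniform-in-space bound, which is precisely the reason the (admittedly technical) Theorem~\ref{thm:compar_feynman} was established beforehand, and without which the uniqueness argument resting on non-explosion could not be brought to bear.
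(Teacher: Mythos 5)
Your argument matches the paper's proof precisely: the first equality comes from Theorem \ref{theo:intert} with $A=a\,I$, the second from Proposition \ref{prop:feynman} once boundedness of $a\,\nabla P_t f$ is secured via Theorem \ref{thm:compar_feynman}, and the sub-intertwining follows from Lemma \ref{lemme:Y} via the operator-norm bound $\vert Y_{t,a,x}\vert_{\mathrm{op}}\leq e^{-\int_0^t\rho_a(X_{s,a}^x)\,ds}$. Your write-up is somewhat more explicit than the paper's about verifying the hypotheses of Proposition \ref{prop:feynman} (smoothness, the Cauchy equation, local-in-time/uniform-in-space boundedness), but the route is identical.
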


\begin{proof}
The proof of the first equality is now straightforward: by the intertwining relation of Theorem \ref{theo:intert}, the quantity $(a \, \nabla P_t f)_{t\geq 0}$ is a solution to the $L^\infty$ Cauchy problem associated to the Schr\"odinger operator $\L_a ^{M_a}$ since it is bounded by Theorem \ref{thm:compar_feynman}. Then the first desired equality is a consequence of the uniqueness property of Proposition \ref{prop:feynman} with $G=a \, \nabla f$. \vspace{0.1cm} \\
To establish the sub-intertwining relation, note that we have for every $F\in \C_0 ^\infty (\R^d , \R)$,
\begin{eqnarray*}
\vert \E [ Y_{t,a, x}  \, F(X_{t,a} ^x)] \vert
& \leq & \E [ \vert Y_{t,a, x}  \, F(X_{t,a} ^x) \vert ] \\
& \leq & \E [ \vert Y_{t,a, x} \vert _{\rm{op}} \, \vert F(X_{t,a} ^x) \vert ] \\
& \leq & \E \left[ e^{-\int_0 ^t \rho _a (X_{s,a} ^x) \, ds } \, \vert F(X_{t,a} ^x) \vert \right] \\
& = & P_{t,a} ^{\rho_a } (\vert F\vert) (x),
\end{eqnarray*}
where to obtain the last inequality we used Lemma \ref{lemme:Y}. The proof is complete
\end{proof}
We mention that a similar sub-intertwining already appeared in the work of Wang \cite{wang} who introduced the notion of modified curvatures to study reflecting diffusion processes on Riemannian manifolds with boundary. \vspace{0.1cm}

Once all the previous ingredients of this section have been introduced, we are ready to give the (brief) proof of Theorem \ref{thm:asym-BL} to which we turn now.
\begin{Pf_th}
At the price of an approximation procedure somewhat similar to that emphasized in the proof of Theorem \ref{theo:BL}, we assume without loss of generality that the smallest eigenvalue $\rho_a$ of the matrix $M_a$ is bounded from below by some positive constant. As mentioned earlier, the covariance version of the variance identity \eqref{eq:var_id} reads as follows: for every $f,g \in \C _0 ^\infty (\R^d , \R)$,
\begin{eqnarray*}
\Cov _\mu (f,g) & = & \int_{\R^d} (a \, \nabla f) ^T \, (- \L _a ^{M_a})^{-1} (a \, \nabla g) \, d\mu_a .
\end{eqnarray*}
Now we have
\begin{eqnarray*}
\vert (-\L _a ^{M_a})^{-1} (a \, \nabla g)\vert & = & \left \vert\int_0^{\infty} \Q_{t,a}^{M_a} (a\, \nabla g) \, dt \right \vert \\
& \leq & \int_0^{\infty} \vert \Q_{t,a}^{M_a} (a\, \nabla g) \vert \, dt \\
& \leq & \int_0^{\infty} P_{t,a} ^{\rho_a} ( \vert a \, \nabla g \vert ) \, dt \\
& = & (- L _a ^{\rho_a})^{-1} ( \vert a \, \nabla g \vert ) ,
\end{eqnarray*}
where to obtain the second inequality we used Theorem \ref{theo:feynman_kac}. Since we have $L_a 1 = 0$, it leads to the equality $(-L _a ^{\rho_a} )^{ -1} \rho_a  =1$ and by positivity preservation we get
$$
\left \Vert (-L_a ^{\rho_a})^{-1} (\vert a \, \nabla g\vert ) \right \Vert _\infty \leq \left \Vert \frac{\vert a \, \nabla g\vert }{\rho_a} \right \Vert _\infty \, \left \Vert (-L_a ^{\rho_a} )^{-1} \rho_a \right \Vert _\infty = \left \Vert \frac{\vert a \, \nabla g \vert }{\rho_a} \right \Vert _\infty  .
$$
The proof of Theorem \ref{thm:asym-BL} is now complete.
\end{Pf_th}

\section{Examples}
\label{sect:ex}

In this final part we illustrate our main results with some classical and less classical examples. Before turning to concrete situations, we would like to emphasize a criterion which is well-adapted to most of the examples presented below. Considering $A= a\, I$ with $a$ the function given by $a := e^{-\ve V}$ for $\ve$ sufficiently small (in fact $\ve \in (0,1/2)$), we obtain with the smooth potential $V_a = V +\log (a^2)$ the new generator
\begin{eqnarray*}
L_a f & = & \Delta f - (\nabla V_a) ^T \, \nabla f \\
& = & \Delta f - (1-2\ve) \, (\nabla V) ^T \, \nabla f .
\end{eqnarray*}
In particular the invariant measure $\mu_a$ has Lebesgue density proportional to $e^{-(1-2\ve) V}$. Therefore we are in position to apply Theorem \ref{theo:BL} once the matrix $A^{-1} \, M_A \, A $, which rewrites as the symmetric matrix $M_a$, is positive definite. Moreover, recall that we assumed all along the paper that the Hessian matrix of $V$ is uniformly bounded from below, a condition ensuring the non-explosion of the underlying process \cite{bakry:non-exp}. Then the Hessian matrix of $V_a$ is also uniformly bounded from below and thus the process $(X_{t,a} ^x)_{t\geq 0}$ is non-explosive, so that Theorem \ref{thm:asym-BL} might also be applied. Computing now the matrix $M_a$ gives us
\begin{equation}
\label{aev}
M_a = \nabla \nabla V +( - \ve \, \Delta V + \ve \, (1 -\ve ) \, \vert \nabla V \vert ^2 ) \, I.
\end{equation}
In the case of one-dimensional diffusions \cite{bjm2}, a simplification occurs so that this choice of function $a$ revealed to be very efficient to establish Poincar\'e type functional inequalities. We will see below that this choice is also relevant in the multi-dimensional case. To be as concise as possible, we will only focus our attention on the results derived from Theorems \ref{theo:BL} and \ref{theo:spectral}. \vspace{0.1cm}

Let us now revisit some examples investigated in \cite{bjm}. We consider the case of some spherically symmetric probability measures, that is, the smooth potential $V$ is radial: we set $ V(x) := U(|x|)$ where $U:[0, \infty) \to \R $. Then the operator $\LL$ rewrites for every $f\in \C ^\infty (\R^d , \R)$ as
$$
\LL f (x) = \Delta f (x) - \frac{U'(\vert x\vert )}{\vert x\vert } \, x ^T \, \nabla f( x), \quad x\in \R ^d .
$$
Simple computations show that we have
$$
\nabla \nabla V (x) = \frac{U'(\vert x\vert )}{\vert x\vert } \, I + \left( U''(\vert x \vert ) - \frac{U'(\vert x \vert )}{\vert x \vert}\right) \, \frac{x \, x^T}{\vert x\vert ^2},
$$
whose eigenvalues are $U''(\vert x \vert )$ and $U'(\vert x\vert )/\vert x\vert $, with respective eigenfunctions $x$ and the vectors $u$ belonging to the orthogonal complement of the vector space spanned by $x$. Therefore if additionally the measure $\mu$ is log-concave, meaning in our radial context that the radial potential $U$ is convex and non-decreasing, then the classical Brascamp-Lieb inequality \eqref{eq:BL} gives for every $f\in \C ^\infty _0 (\R^d , \R)$,
$$
\Var_\mu(f) \leq \int_{\R^d} \frac{\vert \nabla f (x) \vert ^2 }{\min \left \{ U''(\vert x \vert ) , \frac{U'(\vert x \vert )}{\vert x \vert} \right \} } \, d\mu (x).
$$
For instance when we consider the so-called exponential power, or Subbotin, distribution of parameter $\alpha >1$, that is, $U$ is given on $[0,\infty)$ by $U(r) = r^\alpha /\alpha $, then we have
\begin{eqnarray*}
\nabla \nabla V (x) & = & \vert x\vert ^{\alpha -2} \, I + (\alpha -2) \, \vert x\vert ^{\alpha -4} \, x \, x^T ,
\end{eqnarray*}
whose smallest eigenvalue $\rho_1 (x)$ is
$$
\rho_1 (x) = \min \left \{ U''(\vert x \vert ) , \frac{U'(\vert x \vert )}{\vert x \vert} \right \} = \min \{ 1 , \alpha -1\} \,  \vert x\vert^{\alpha -2} .
$$
In other words, this minimum strongly depends on the value of the parameter $\alpha$, according to $\alpha \in (1,2]$ or $\alpha \geq 2$, the critical case being the Gaussian case $\alpha = 2$ for which $U''(r) = U'(r) / r = 1$. \vspace{0.1cm}

Let us see how to apply Theorem \ref{theo:BL} in our general radial context. Letting $A$ be the invertible matrix $A = a \, I$ with $a$ some smooth and positive function, the smallest eigenvalue $\rho_a$ of the symmetric matrix $A ^{-1} \, M_A \, A$ is
$$
\rho_a (x) = \min \left \{ U''(\vert x \vert ) , \frac{U'(\vert x \vert )}{\vert x \vert} \right \} - a (x) \, \LL a^{-1} (x), \quad x \in \R ^d,
$$
and if the condition $\rho_a >0$ holds, then by Theorem \ref{theo:BL} we have the generalized Brascamp-Lieb inequality: for every $f\in \C ^\infty _0 (\R^d , \R)$,
$$
\Var_\mu(f) \leq \int_{\R^d} \frac{\vert \nabla f (x) \vert ^2 }{\min \left \{ U''(\vert x \vert ) , \frac{U'(\vert x \vert )}{\vert x \vert} \right \} - a (x) \, \LL a^{-1} (x)} \, d\mu (x).
$$
Note that the condition $\rho_a >0$ induces a region on which the potential $V$ is strictly convex. Indeed, the function $- a \, \LL a^{-1} $ cannot be positive on $\R^d$ since it is non-positive in average:
\begin{eqnarray*}
- \int_{\R^d} a \, \LL a^{-1}  \, d\mu & = & \int_{\R^d} (\nabla a)^T \, \nabla a^{-1} \, d\mu \\
& = & - \int_{\R ^d} \frac{\vert \nabla a \vert ^2}{a^2} \, d\mu \\
& \leq & 0.
\end{eqnarray*}
Moreover we also observe that if $\inf \, \rho_a >0$ then we have by Theorem \ref{theo:spectral} the spectral gap estimate
$$
\lambda_1 (-\LL , \mu) \geq \inf \, \rho_a ,
$$
giving an alternative criterion to the estimates obtained in \cite{bjm} for spherically symmetric log-concave probability measures. \vspace{0.1cm}

Now the point is to find a nice function $a$ such that $\rho_a >0$ and, as announced, we choose $a = e^{-\varepsilon V }$ where $\varepsilon \in (0,1/2)$, so that by \eqref{aev} the condition $\rho_a >0$ becomes: for every $x\in \R^d$,
\begin{equation}
\label{eq:spher_sym2}
\min \left \{ U''(\vert x \vert ) , \frac{U'(\vert x \vert )}{\vert x \vert} \right \} - \varepsilon (d-1) \, \frac{U'(\vert x \vert )}{\vert x \vert}  - \varepsilon \, U''(\vert x \vert ) + \varepsilon \, (1-\varepsilon) \, U'(\vert x \vert ) ^2 >0.
\end{equation}
Let us see how this criterion might be applied for some particular potentials $U$. Coming back to the previous example of the exponential power distribution of parameter $\alpha >1$, we denote $\gamma := (\alpha +d -2)/ \min \{ 1 , \alpha -1\} $ (which is $\geq 2$ since $d\geq 2$) and we have
\begin{eqnarray*}
\rho_a(x) & =  & \vert x \vert ^{\alpha -2} \, \left( \min \{ 1 , \alpha -1\}  - \varepsilon \, (\alpha +d -2) \right) + \varepsilon \, (1-\varepsilon) \, \vert x \vert ^{2(\alpha -1)} \\
& \geq & \min \left \{ 1 - \varepsilon \, \gamma , \varepsilon \, (1-\varepsilon) \right \} \, \left( \min \{ 1 , \alpha -1\} \, \vert x \vert ^{\alpha -2} + \vert x \vert ^{2(\alpha -1)} \right) .
\end{eqnarray*}
Hence optimizing in $\varepsilon \in (0,1/\gamma)$ yields the inequality
$$
\rho_a(x) \geq \frac{8\sqrt{\gamma -1}}{(\sqrt{\gamma-1} + \sqrt{\gamma+3})^3} \, \left( \min \{ 1 , \alpha -1\} \, \vert x \vert ^{\alpha -2} + \vert x \vert ^{2(\alpha -1)} \right) ,
$$
the prefactor behaving in large dimension as
$$
\frac{8\sqrt{\gamma -1}}{(\sqrt{\gamma-1} + \sqrt{\gamma+3})^3} \underset{d \to \infty}{\approx} \frac{\min \{ 1 , \alpha -1\}}{d}.
$$
Then the following generalized Brascamp-Lieb inequality holds: for every $f\in \C ^\infty _0 (\R^d , \R)$,
$$
\frac{8\sqrt{\gamma -1}}{(\sqrt{\gamma-1} + \sqrt{\gamma+3})^3} \, \Var_\mu(f) \leq \int_{\R^d} \frac{\vert \nabla f (x) \vert ^2 }{\min \{ 1 , \alpha -1\} \, \vert x \vert ^{\alpha -2} + \vert x \vert ^{2(\alpha -1)}} \, d\mu (x).
$$
In the Gaussian case $\alpha = 2$ we have $\gamma = d$ and we obtain the nice inequality
$$
\frac{8\sqrt{d-1}}{(\sqrt{d-1} + \sqrt{d+3})^3} \, \Var_\mu(f) \leq \int_{\R^d} \frac{\vert \nabla f (x) \vert ^2 }{1 + \vert x \vert ^2} \, d\mu (x),
$$
which is asymptotically sharp as the dimension $d$ goes to infinity, as observed by computing both sides of the inequality with the linear function $f(x) = \sum_{k=1} ^d x_k$. It even slightly improves the constant of the same estimate obtained in \cite{bjm} through another approach. \vspace{0.1cm}

Let us continue the study of generalized Brascamp-Lieb inequalities for spherically symmetric probability measures by considering the case of a heavy-tailed distribution which is, in essence, rather different from the log-concave measures previously investigated. More precisely we focus on the so-called generalized Cauchy distribution of parameter $\beta$, that is, the probability measure $\mu$ has Lebesgue density on $\R^d$ proportional to $(1+\vert x\vert ^2) ^{-\beta}$ where $\beta >d/2$.
Then the associated potential $V$ is radial and is given by $V(x)=U(|x|)$ with
$$
U(r) = \beta \, \log (1+ r^2) , \quad r \geq 0.
$$
The Hessian matrix of $V$ is
$$
\nabla \nabla V (x) = \frac{2 \beta}{1+\vert x\vert ^2} \, I - \frac{4 \beta}{(1+\vert x\vert ^2)^2} \, x \, x^T ,
$$
whose smallest eigenvalue $\rho_1 (x)$ is
$$
\rho_1 (x) = \frac{2\beta \, (1-\vert x\vert ^2)}{(1+\vert x\vert ^2)^2} ,
$$
which is not positive but is bounded from below by $-\beta /4$. Using \eqref{eq:spher_sym2} we have
\begin{eqnarray*}
\rho_a (x) & = & \frac{2 \beta}{(1+\vert x\vert ^2)^2} \, \left( 1 - \varepsilon d + (-1 - \varepsilon d + 2 \varepsilon + 2\beta \varepsilon \, (1-\varepsilon)) \, \vert x\vert ^2 \right) \\
& \geq & \frac{2\beta}{1+\vert x\vert ^2} \, \min \left \{ 1 - \varepsilon d, -1 - \varepsilon d +2 \varepsilon + 2\beta \varepsilon \, (1-\varepsilon)\right \},
\end{eqnarray*}
and optimizing in $\varepsilon \in (0,1/d)$ entails the estimate
$$
\rho_a (x) \geq \frac{2 \, (\beta - d)}{1+\vert x\vert ^2} ,
$$
leading to the generalized Brascamp-Lieb inequality
$$
2 \, (\beta - d) \, \Var_\mu(f) \leq \int_{\R^d} (1 + \vert x \vert ^2) \, \vert \nabla f (x) \vert ^2 \, d\mu (x).
$$
As shown by Nguyen in \cite{nguyen}, the optimal constant in the above inequality is $2 \, (\beta - 1)$ for $\beta > d+1$, the case $\beta \in (d/2 , d+1)$ being covered by the results emphasized in \cite{bjm}. Hence the latter estimate is sharp in dimension 1 for $\beta > 3/2$ but there is still room for improvement in larger dimension when using the intertwining method, maybe by choosing conveniently another function $a$. However we draw the reader's attention to the fact that our estimate is obtained with exactly the same argument as in the log-concave case above, showing the relevance of the intertwining method in a wide range of situations. \vspace{0.1cm}

Let us finish this work by observing how the intertwining method allows us to obtain a spectral gap estimate beyond the case of spherically symmetric probability measures. In particular we concentrate on a non-classical example: a non-product measure on $\R^2$ whose associated potential $V$ exhibits a non compact region on which it is concave. More precisely we consider the following potential, symmetric in both coordinates $x$ and $y$,
$$
V(x) : = \frac{x^4}{4}+ \frac{y^4}{4} - \beta x y , \quad x,y \in \R ,
$$
where $\beta$ is some positive parameter controlling the size of the concave region. Although our approach might be generalized to larger dimension, we reduce the study to dimension 2 since our objective is to give a flavour of how the intertwining method can be used to obtain a spectral gap estimate. In particular the constants we obtain below have no reason to be sharp. To further complement the spectral analysis of these type of models appearing in statistical mechanics, see for instance, among many other interesting articles, the nice papers \cite{helffer, helffer3, ledoux, gr, chen}. \vspace{0.1cm}

In order to use Theorem \ref{theo:spectral}, we have to find a suitable invertible matrix $A$ such that the matrix $A^{-1} \, M_A \, A$ is symmetric and its smallest eigenvalue $\rho_a$ is bounded from below by some positive constant. In contrast to the previous examples, the choice of the matrix $A$ as a multiple of the identity is not convenient because of the degeneracy of the Hessian matrix of $V$:
\[
          \nabla \nabla V(x,y) = \left( \begin{array}{cc}
                                  3x^2 &-\beta  \\
                                  -\beta &  3 y^2 \\
                                 \end{array}\right) .
\]
The idea is to choose a diagonal matrix $A$ with different weight on the diagonal, allowing us to overcome this degeneracy. Set
\[
          A = \left( \begin{array}{cc}
                                 a_1 & 0  \\
                                  0 & a_2 \\
                                 \end{array}\right) ,
\]
where $a_1 , a_2$ are two positive smooth functions on $\R ^2$. Then the matrix $A^{-1} \, M_A \, A$ is of the following form
\[
A^{-1} \, M_A \, A = \nabla \nabla V - \L A^{-1} \, A = \left(
\begin{array}{cc}
X & -\beta  \\
-\beta & Y \\
\end{array}
\right) ,
\]
with
$$
X (x,y):= 3x^2 - a_1 \, \LL a_1 ^{-1} \quad \mbox{ and } \quad Y(x,y):= 3y^2 - a_2 \, \LL a_2 ^{-1} .
$$
The eigenvalues are thus given by
$$
\frac{X+Y}{2} \pm \frac{1}{2} \sqrt{(X-Y)^2 + 4 \beta ^2} ,
$$
which are both bounded from below by $\min \{ X,Y \} - \beta$. Hence choosing $a_2 (x,y) = a_1 (y,x)$, we have $\inf\, \rho_a >0$ as soon as $\inf X - \beta >0$ and the rest of the analysis is devoted to find a convenient function $a_1 = a$ (we drop the subscript 1 in the sequel) ensuring this condition. Letting $a = e^{-W}$ where $W$ is some smooth function defined on $\R^2$, we have
\begin{eqnarray*}
3x^2 - a (x,y) \, \LL a ^{-1} (x,y) & = & 3x^2 + \Delta W (x,y)- \vert \nabla W (x,y)\vert ^2 - \nabla V(x,y) \,\nabla W(x,y) \\
& = &  3x^2 + \Delta Z(x,y) - \vert \nabla Z(x,y) \vert ^2 - \frac{\Delta V(x,y)}{2} + \frac{\vert \nabla V(x,y) \vert ^2}{4} ,
\end{eqnarray*}
where $Z := W + V/2$. The previous equation indicates that a potential candidate is
$$
Z(x,y) := \frac{b y^4}{4} + \frac{c x^2}{2},
$$
where $b,c$ are some real constants to be chosen at the end. A short computation gives
\begin{eqnarray*}
\lefteqn{3x^2 + \Delta Z(x,y) - \vert \nabla Z(x,y) \vert ^2 - \frac{\Delta V(x,y)}{2} + \frac{\vert \nabla V(x,y) \vert ^2}{4} } \\
& & = 3x^2 + c+ 3 by^2 -( c^2 x^2 + b^2 y^6) - \frac{3(x^2+y^2)}{2} + \frac{(x^3 -\beta y)^2}{4} + \frac{(y^3- \beta x)^2}{4} \\
& & \geq c + \left(\frac{3}{2}-c^2\right) x^2 + 3 \, \left( b -\frac{1}{2}\right) y^2 -b^2 y^6 +\frac{(y^3- \beta x)^2}{4} \\
&  & \geq c + \left(\frac{3}{2}-c^2\right) x^2 + 3 \, \left( b -\frac{1}{2}\right) y^2 -b^2 y^6 +\frac{(1-\lambda )\, y^6}{4}- \left( \frac{1}{\lambda} - 1 \right) \, \beta^2 x^2 ,
\end{eqnarray*}
where in the last line we used the trivial inequality $(u-v)^2 \geq (1-\lambda ) \, u^2 - (1/\lambda -1) \, v^2 $ available for every $u,v\in \R$ and $\lambda \in (0,1)$. Now since for every $p,q>0$ the minimum on $\R_+$ of the function $z \mapsto -p \, z^2 + q \, z^3$ is
\[
 -\frac{2 p \, \sqrt{p}}{3 \, \sqrt{3q}},
\]
we obtain from the above inequality,
\begin{eqnarray*}
\lefteqn{3x^2 + \Delta Z(x,y) - \vert \nabla Z(x,y) \vert ^2 - \frac{\Delta V(x,y)}{2} + \frac{\vert \nabla V(x,y) \vert ^2}{4} } \\
& & \geq c + \left(\frac{3}{2}-c^2 - \left( \frac{1}{\lambda} - 1 \right) \, \beta^2 \right) \, x^2 - 3 \, \left( \frac{1}{2} - b\right) \, y^2 + \left( \frac{1-\lambda}{4} - b^2 \right) \, y^6 \\
& & \geq c + \left(\frac{3}{2}-c^2 - \left( \frac{1}{\lambda} - 1 \right) \, \beta^2 \right) \, x^2 - \frac{2 \, \left( \frac{1}{2} - b \right) ^{3/2}}{\sqrt{\frac{1-\lambda}{4} - b^2}} ,
\end{eqnarray*}
provided the constant $b$ satisfies $\vert b\vert < \sqrt{(1-\lambda)} /2$. For instance taking $\lambda= 1/2$ and $b=1/4$ yields
\begin{eqnarray*}
3x^2 + \Delta Z(x,y) - \vert \nabla Z(x,y) \vert ^2 - \frac{\Delta V(x,y)}{2} + \frac{\vert \nabla V(x,y) \vert ^2}{4}  & \geq & c + \left(\frac{3}{2}-c^2 - \beta^2 \right) \, x^2 - 1,
\end{eqnarray*}
and choosing $c := \sqrt{3/2 - \beta^2} $ entails for sufficiently small $\beta$ the estimate
\begin{eqnarray*}
\inf \rho_a & \geq & \sqrt{3/2 - \beta^2} - 1 - \beta \, > \, 0,
\end{eqnarray*}
leading by Theorem \ref{theo:spectral} to the conclusion
$$
\lambda_1 (-L , \mu) \geq \sqrt{3/2 - \beta^2} - 1 - \beta .
$$

\vspace{0.5cm}


\begin{thebibliography}{10}

\bibitem{bakry:non-exp} D. Bakry.
Un crit\`ere de non-explosion pour certaines diffusions sur une vari\'et\'e riemannienne compl\`ete. C. R. Acad. Sci. Paris S\'er. I Math. 303:23-26, 1986.

\bibitem{bakry_emery} D. Bakry and M. \'Emery. Diffusions hypercontractives. \textit{S\'eminaire de Probabilit\'es}, XIX, 177-206, Lecture Notes in Math., 1123, Springer, 1985.

\bibitem{BGL} D. Bakry, I. Gentil, and M. Ledoux. \textit{Analysis and geometry of Markov diffusion operators}. Grundlehren der mathematischen Wissenschaften, 348, Springer, Heidelberg, 2013.


\bibitem{bobkov} S.G. Bobkov. Isoperimetric and analytic inequalities for log-concave probability measures. \textit{Ann. Probab.}, 27:1903-1921, 1999.

\bibitem{bht} S.G. Bobkov, F. G\"otze and C. Houdr\'e. On {G}aussian and {B}ernoulli covariance representations. \textit{Bernoulli}, 7:439-451, 2001.

\bibitem{bob_ledoux3} S.G. Bobkov and M. Ledoux. From {Brunn}-{M}inkowski to {B}rascamp-{L}ieb and to logarithmic {S}obolev inequalities. \textit{Geom. Funct. Anal.}, 10:1028-1052, 2000.

\bibitem{bob_ledoux2} S.G. Bobkov and M. Ledoux. Weighted {P}oincar\'e-type inequalities for {C}auchy and other convex measures. \textit{Ann. Probab.}, 37:403-427, 2009.

\bibitem{bgg} F. Bolley, I. Gentil and A. Guillin. Dimensional improvements of the logarithmic {S}obolev, {T}alagrand and {B}rascamp-{L}ieb inequalities. Preprint 2015.

\bibitem{bj} M. Bonnefont and A. Joulin. Intertwining relations for one-dimensional diffusions and application to functional inequalities. \textit{Pot. Anal.}, 41:1005-1031, 2014.

\bibitem{bjm} M. Bonnefont, A. Joulin and Y. Ma. Spectral gap for spherically symmetric log-concave probability measures, and beyond. Preprint 2015.

\bibitem{bjm2} M. Bonnefont, A. Joulin and Y. Ma. A note on spectral gap and weighted {P}oincar\'e inequalities for some one-dimensional diffusions. To appear in \textit{ESAIM Probab. Stat.}, 2016.

\bibitem{brascamp_lieb} H.J. Brascamp and E.H. Lieb. On extensions of the {B}runn-{M}inkovski and {P}r\'ekopa-{L}eindler theorems, including inequalities for log-concave functions, and with an application to the diffusion equation. \textit{J. Funct. Anal.}, 22:366-389, 1976.

\bibitem{carlen_cordero} E. Carlen, D. Cordero-Erausquin and E. Lieb. Asymmetric covariance estimates of {B}rascamp-{L}ieb type and related inequalities for log-concave measures. \textit{Ann. Inst. H. Poincar\'e Probab. Statist.}, 49:1-12, 2013.

\bibitem{cj} D. Chafa\"i and A. Joulin. Intertwining and commutation relations for birth-death processes. \textit{Bernoulli}, 19:1855-1879, 2013.

\bibitem{chen} M.F. Chen. Spectral gap and logarithmic {S}obolev constant for continuous spin systems. \textit{Acta Math. Sin. English Series}, 24:705-736, 2008.

\bibitem{chen_wang} M.F. Chen and F.Y. Wang. Estimation of spectral gap for elliptic operators. \emph{Trans. Amer. Math. Soc.}, 349:1239-1267, 1997.

\bibitem{cordero} D. Cordero-Erausquin. Transport inequalities for log-concave measures, quantitative forms and applications. Preprint 2015.

\bibitem{gr} I. Gentil and C. Roberto. Spectral gaps for spin systems: some non-convex phase examples. \textit{J. Funct. Anal.}, 180:66-84, 2001.

\bibitem{harge} G. Harg\'e. Reinforcement of an inequality due to {B}rascamp and {L}ieb. \textit{J. Funct. Anal.}, 254:267-300, 2008.

\bibitem{helffer} B. Helffer. Remarks on decay of correlations and {W}itten {L}aplacians, {B}rascamp-{L}ieb inequalities and semiclassical limit. \textit{J. Funct. Anal.}, 155:571-586, 1998.

\bibitem{helffer3} B. Helffer. Remarks on decay of correlations and {W}itten {L}aplacians III - {A}pplication to logarithmic {S}obolev inequalites. \textit{Ann. Inst. H. Poincar\'e Probab. Statist.}, 35:483-508, 1999.

\bibitem{helffer_book} B. Helffer. \emph{Semiclassical analysis, {W}itten {L}aplacians, and statistical mechanics}. Series in Partial Differential Equations and Applications, World Scientific Publishing, 2002.

\bibitem{hormander} L. H\"ormander. $L^2$ estimate and existence theorems for the $\bar{\partial}$ operator. \textit{Acta Math.}, 113:89-152, 1965.

\bibitem{houdre} C. Houdr\'e. Remarks on deviation inequalities for functions of infinitely divisible random vectors. \textit{Ann. Probab.}, 30:1223-1237, 2002.

\bibitem{kls} E. Kannan, L. Lov\'asz, and M. Simonovits. Isoperimetric problems for convex bodies and a localization lemma. \textit{Discrete Comput. Geom.}, 13: 541-559, 1995.

\bibitem{khas} R.Z. Khas'minskii. Ergodic properties of recurrent diffusion processes and stabilization of solution to the Cauchy problem for parabolic equations. \textit{Theor. Prob. Appl.}, 5:179-195, 1960.

\bibitem{milman_koles} A.V. Kolesnikov and E. Milman. Poincar\'e and {B}runn-{M}inkowski inequalities on weighted Riemannian manifolds with boundary. Preprint, 2014.

\bibitem{ledoux} M. Ledoux. Logarithmic {S}obolev inequalities for unbounded spin systems revisited. \textit{S\'eminaire de Probabilit\'es}, XXXV, 167-194, Lecture Notes in Math., 1755, Springer, 2001.

\bibitem{Li} P. Li. Uniqueness of $L^1$ solutions for the Laplace equation and the heat equation on Riemannian manifolds. \textit{J. Differential Geometry}, 20:447-457, 1984.

\bibitem{menz} G. Menz. A {B}rascamp-{L}ieb type covariance estimate. \textit{Electron. J. Probab.}, 19:1-15, 2014.

\bibitem{menz_otto} G. Menz and F. Otto. Uniform logarithmic {S}obolev inequalities for conservative spin systems with super-quadratic single-site potential. \textit{Ann. Probab.}, 41:2182-2224, 2013.

\bibitem{meyn_tweedie} S.P. Meyn and R.L. Tweedie. Stability of {M}arkovian processes III: {F}oster-{L}yapunov criteria for continuous-time processes. \textit{Adv. App. Probab.}, 25:518-548, 1993.

\bibitem{milman} E. Milman. On the role of convexity in isoperimetry, spectral gap and concentration. \textit{Invent. Math.}, 177:1-43, 2009.

\bibitem{nguyen} V.H. Nguyen. Dimensional variance inequalities of {B}rascamp-{L}ieb type and a local approach to dimensional {P}r\'ekopa's theorem. \textit{J. Funct. Anal.}, 266:931-955, 2014.

\bibitem{Strichartz} R. Strichartz. Analysis of the Laplacian on the complete Riemannian manifold. \textit{J. Funct. Anal.}, 52:48-79, 1983.

\bibitem{veysseire} L. Veysseire. A harmonic mean bound for the spectral gap of the Laplacian on Riemannian manifolds. \emph{C. R. Math. Acad. Sci. Paris}, 348:1319-1322, 2010.

\bibitem{wang} F.Y. Wang. Modified curvatures on manifolds with boundary and applications. \textit{Pot. Anal.}, 41:699-714, 2014.

\end{thebibliography}
\end{document}